\newtheorem{thm}{Theorem}[section]
\newtheorem{lem}[thm]{Lemma}
\newtheorem{prop}[thm]{Proposition}
\newtheorem{prob}{Problem}
\title{\bf \Large  A best bound for $\lambda_2(G)$ to  guarantee $\kappa(G) \geq 2$}
\author{
{\small  Wenqian Zhang, \ \ Jianfeng Wang\footnote{Corresponding author.
\newline{\it \hspace*{5mm}Email addresses:} zhangwq@sdut.edu.cn(W.Q. Zhang), jfwang@sdut.edu.cn (J.F.Wang).}}\\[2mm]
\footnotesize School of Mathematics and Statistics, Shandong University of Technology, Zibo 255049, China}
\date{ }
\date{}
\begin{document}
\maketitle

\setcounter{page}{1}
\begin{abstract}
Let $G$ be a connected $d$-regular graph  with a given order and  the second largest eigenvalue $\lambda_2(G)$. Mohar and O (private communication) asked a challenging problem: what is the best upper bound for $\lambda_2(G)$ which guarantees that $\kappa(G) \geq t+1$, where $1 \leq t \leq d-1$ and $\kappa(G)$ is the vertex-connectivity of $G$, which was also mentioned by Cioab\u{a}. As a starting point, we solve this problem in the case $t =1$, and characterize all families of extremal graphs. \\

\noindent {\it AMS classification:} 05C50\\[1mm]
\noindent {\it Keywords}: Regular graphs; Second largest eigenvalue; Cut vertex; Connectivity.\\
\end{abstract}

\section{\Large Introduction}

In this paper, we focus on the eigenvalues $d > \lambda_2(G) \geq \cdots \geq \lambda_n(G)$ of adjacency matrix $A(G)$ of $d$-regular graphs with order $n$. Set $\lambda = \lambda(G) = \max\{|\lambda_2(G)|,|\lambda_n(G)|\}$ to be the {\it second eigenvalue} of $G$.   Proverbially, the second largest eigenvalue $\lambda_2$ and the second eigenvalue $\lambda$ of  graphs have been paid much attention in several aspects. One of the best-known cases is that it could describe the {\it expander graphs} which are useful in the design and analysis of communication networks. From the point view of spectral graph theory, it was  Alon and Milman \cite{alon-mil} and Dodziuk \cite{dod}  who independently gave a discrete analogue of Cheeger's result on Riemannian manifold \cite{che}, which is stated as follows and involves the {\it edge  expansion ratio $h(G)$ of $G$} .
\begin{equation}\label{A-B-bound}
\frac{d-\lambda}{2} \leq h(G) \leq \sqrt{2d(d-\lambda)}, \quad \mbox{where} \quad h(G) = \min_{S \subseteq V, |S| \leq |V|/2}\frac{|E(S,\overline{S})|}{|S|}.
\end{equation}
As we have known, large spectral gap $d-\lambda$ implies high expansion, vice versa. Remarkably, Alon \cite{alon-1} showed that a regular bipartite graph is an expander iff $d-\lambda$ is large enough (note $\lambda = \lambda_2$ for bipartite graphs). Further, we precisely know how large the spectral gap can be in an $d$-regular graph $G_{n,d}$ of order $n$, due to Alon-Boppana bound $\lim\limits_{n\rightarrow \infty}\inf\lambda(G_{n,d}) = 2\sqrt{d-1}$ (see \cite{nil}). See the survey \cite{hoo-lin-wig} for more details about this topic.

In light of the  Alon-Boppana bound \eqref{A-B-bound}, it opens the door to the second aspect which is related to the graphs characterized by the second (largest) eigenvalues. Recall, A $d$-regular graph $G$ is {\it Ramanujan} if $\lambda(G) \leq 2\sqrt{d-1}$. The significant progress in this field may be that there exist infinite families of regular bipartite Ramanujan
graphs of every degree bigger than 2, owe to Marcus et al \cite{mar-spi-sri}. On the other side, it is also a rather difficult problem that determining all the connected (regular) graphs with small $\lambda_2$ ($\leq 2\sqrt{d-1}$).  Recall, the graphs with $\lambda_2 \leq 2$ is the so called {\it reflexive graphs}  corresponding to sets
of vectors  norm 2 and at angles $90^{\circ}$ or $120^{\circ}$ but without a structural characterization, thanks to Neumaier and Seidel \cite{neu-sei}. Especially,  Maxwell \cite{max} identified such trees named as {\it Hyperbolic trees}. For other miscellaneous results about the graphs with small $\lambda_2 \leq \frac{1}{3}$ or $\sqrt{2}-1$ or $\frac{\sqrt{5}-1}{2}$, see Cvetkovi\'c and Simi\'c's survey \cite{cve-sim-filomat} and  \cite[eg.]{tam-sta,sta} for (regular) graphs with $\lambda_2 \leq 2$.

It is the third aspect that the second largest eigenvalues can be employed to depict the parametric properties of (regular) graphs. Obviously,  the edge expansion ratio in \eqref{A-B-bound}, as a graphs parameter, seems to be the first example. Remarkably, it has always been a hot topic describing the connections between the second largest eigenvalue of a regular graph and other combinatorial parameters, such as the toughness \cite[eg.]{alo-2,bro,cio-CMJ,cio-wong-DAM,liu-chen},  the spanning trees \cite[eg.]{cio-CMJ,cio-won-LAA,gu-lai-JGT}, the (perfect) matching \cite[eg.]{cio-JCTB,O-cio}, the regular factors \cite[eg.]{lu-JGT}, the matching extendability \cite{zhang}, the edge-connectivity \cite[eg.]{abi-bri-mar-O,cha-IPL,cio-LAA,kri-sud,O-LAA,O-cio} and so on. See \cite[Chapter 2]{bro-hae} for other details about the second (largest) eigenvalues.

In this paper, we pay attention to the second largest eigenvalue and vertex-connectivity of regular graphs. Unlike more fruitful results about the edge-connectivity $\kappa'(G)$ of a $d$-regular graph based on its second largest eigenvalue $\lambda_2(G)$, there are few ones concerning the vertex-connectivity $\kappa(G)$ built on $\lambda_2(G)$, the earliest two of which are respectively Krivelevich and Sudakov's Theorem 4.1 in \cite{kri-sud} and Fiedler's Theorem 4.1 \cite{fie} (note, $\lambda_2(G) = d-\alpha(G)$ with $\alpha(G)$ being the algebraic connectivity). The others are obtained in past five years. Some of them is fit for the regular multigraph \cite{abi-bri-mar-O,O-DAM}. Here we  concentrate on the results for the regular simple graphs, the first one of which is  due to Cioab\u{a} and Gu \cite{cio-CMJ}.

\begin{prop}{\rm \cite[Cioab\u{a} and Gu]{cio-CMJ}}
Let $G$ be a connected $d$-regular simple graph, $d \geq 3$, and
$$\lambda_2(G) <
\begin{cases}
\frac{d-2+\sqrt{d^2+12}}{2}& \mbox{if $d$ is even}\\
\frac{d-2+\sqrt{d^2+8}}{2}& \mbox{if $d$ is old}.
\end{cases}
$$
Then $\kappa(G) \geq 2$.
\end{prop}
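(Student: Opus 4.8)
The plan is to prove the contrapositive: if $\kappa(G)\le 1$, then $\lambda_2(G)$ is at least the claimed bound. Since $G$ is connected and $d$-regular with $d\ge 3$, it has $n\ge d+1\ge 4$ vertices, so $\kappa(G)=1$ and $G$ has a cut vertex $v$. I would fix two distinct components $B_1,B_2$ of $G-v$, put $m_i=|V(B_i)|$, and let $k_i\ge 1$ be the number of edges between $v$ and $B_i$, so that $k_1+k_2\le d$. In $B_i$ the $k_i$ neighbours of $v$ have degree $d-1$ and all remaining vertices have degree $d$; thus $B_i$ is connected with exactly $k_i$ vertices of degree $d-1$ and $m_i-k_i$ of degree $d$, and $2|E(B_i)|=dm_i-k_i$. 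This last identity forces $k_i\equiv dm_i\pmod 2$: in particular, when $d$ is even each $k_i$ is even, hence $k_1,k_2\ge 2$, so $k_i\le d-2$; when $d$ is odd one only gets $k_i\le d-1$.

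The first main step is an interlacing reduction. The disjoint union $B_1\cup B_2$ is an induced subgraph of $G$ on $m_1+m_2\le n-1$ vertices, so Cauchy interlacing gives $\lambda_2(G)\ge \lambda_2(B_1\cup B_2)$; and since the spectrum of $B_1\cup B_2$ contains both largest eigenvalues $\mu_1(B_1)$ and $\mu_1(B_2)$, its second largest eigenvalue is at least $\min\{\mu_1(B_1),\mu_1(B_2)\}$. Hence $\lambda_2(G)\ge \min\{\mu_1(B_1),\mu_1(B_2)\}$, and it suffices to bound $\mu_1(B_i)$ from below for each $i$ separately.

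The second main step analyses $B_i$ by its order. Its minimum degree is at least $d-1$, so $m_i\ge d$; and $m_i=d$ would force $B_i=K_d$, hence $k_i=d$, contradicting $k_i\le d-1$, so $m_i\ge d+1$. If $m_i=d+1$, then every degree-$d$ vertex of $B_i$ is adjacent to all others, so $\overline{B_i}$ has maximum degree $\le 1$ and is exactly a perfect matching on the $k_i$ vertices of degree $d-1$; thus $k_i$ is even and $B_i=K_{d+1}-M$ with $|M|=k_i/2$. The partition of $V(B_i)$ into the $k_i$ matched and the $d+1-k_i$ unmatched vertices is equitable with quotient matrix $\begin{pmatrix} k_i-2 & d+1-k_i\\ k_i & d-k_i\end{pmatrix}$, so $\mu_1(B_i)=\tfrac12\bigl((d-2)+\sqrt{(d-2)^2+8d-4k_i}\,\bigr)$, which is decreasing in $k_i$; substituting the largest admissible value, $k_i=d-2$ for $d$ even and $k_i=d-1$ for $d$ odd, returns precisely $\tfrac12(d-2+\sqrt{d^2+12})$, respectively $\tfrac12(d-2+\sqrt{d^2+8})$. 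If instead $m_i\ge d+2$, the Rayleigh quotient at the all-ones vector gives $\mu_1(B_i)\ge 2|E(B_i)|/m_i=d-k_i/m_i\ge d-k_i/(d+2)$, and clearing denominators and squaring shows $d-(d-2)/(d+2)>\tfrac12(d-2+\sqrt{d^2+12})$ (the gap reduces to $4(d-2)^2>0$) and $d-(d-1)/(d+2)>\tfrac12(d-2+\sqrt{d^2+8})$ (gap $4(d-1)^2>0$). In every case $\mu_1(B_i)$ is at least the stated bound, so $\lambda_2(G)$ is too, which is the desired contradiction; therefore $\kappa(G)\ge 2$.

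I expect the delicate point to be the regime $m_i=d+2$, where the crude average-degree estimate is only barely sufficient — the inequality $d-(d-2)/(d+2)\ge \tfrac12(d-2+\sqrt{d^2+12})$ comes down, after squaring, to $(d-2)^2\ge 0$ — so the parity observation (that $k_i$ is even when $d$ is even, which rules out a component equal to $K_{d+1}$ minus a near-perfect matching) is essential rather than cosmetic; without it the claimed bound would be false.
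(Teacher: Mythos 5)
Your proof is correct: I checked the parity constraint $k_i\equiv dm_i\pmod 2$ (which for even $d$ forces $k_i\le d-2$), the quotient-matrix eigenvalue $\tfrac12\bigl(d-2+\sqrt{(d-2)^2+8d-4k_i}\bigr)$ in the case $m_i=d+1$, and the squared gaps $4(d-2)^2$ and $4(d-1)^2$ in the case $m_i\ge d+2$; all are as you state, and the contrapositive structure is sound. Note, however, that this paper does not actually prove the proposition --- it is quoted from Cioab\u{a} and Gu --- so there is no in-paper proof to match against; what can be compared is methodology. Your route (Cauchy interlacing on the induced subgraph $B_1\cup B_2$, giving $\lambda_2(G)\ge\min\{\mu_1(B_1),\mu_1(B_2)\}$, then a case analysis on $|V(B_i)|$) is the elementary component-interlacing argument, and it is genuinely different from the technique this paper builds for its sharper Theorem 1.5, which partitions $V(G)$ into five classes around the cut vertex, passes to the quotient matrix $B_3$, applies the tridiagonal reduction of Lemma 2.1, and then optimizes over the edge counts $r,t$ and part sizes $p,q$. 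Your approach buys brevity and self-containedness, but by collapsing each side of the cut to a single spectral radius it can only certify the non-sharp thresholds $\tfrac12(d-2+\sqrt{d^2+12})$ and $\tfrac12(d-2+\sqrt{d^2+8})$, whereas the five-part quotient tracks the interaction across the cut vertex and yields the optimal bounds $\lambda_2(G_{d,2\lfloor d/4\rfloor})$ and $\lambda_2(G_{d,(d-1)/2})$. Your closing remark is accurate and worth keeping: the parity observation is load-bearing, since without $k_i\le d-2$ for even $d$ both the $m_i=d+1$ computation and the $m_i\ge d+2$ average-degree estimate would fail to reach the claimed bound.
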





\begin{prop}{\rm\cite[Abiad et al.]{abi-bri-mar-O}}
Let $k$ be an integer and $G$ be a $d$-regular graph of order $n$ with $d \geq k \geq 2$. Set $f(d,k) =d+1$ if $k=2$ and $f(d,k) =d+2-k$ otherwise.
If $\lambda_2(G) < d-\frac{(k-1)dn}{2f(d,k)(n-f(d,k))}$, then $\kappa(G) \geq k$.
\end{prop}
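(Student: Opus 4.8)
The plan is to argue by contradiction. Suppose $\kappa(G) \le k-1$. If $G$ were disconnected then, being $d$-regular, it would have $\lambda_2(G) = d$, contradicting the hypothesis; so $G$ is connected and $1 \le \kappa(G) =: \kappa \le k-1$. Fix a minimum vertex cut $S$, so $|S| = \kappa$, and write $V(G) \setminus S = A \sqcup B$ with $A, B$ non-empty, $e(A,B) = 0$, and $|A| \le |B|$. The whole argument then rests on two facts about this picture: a lower bound on $|A|$, and an eigenvalue estimate coming from the fact that $A$ (or $B$) has few edges to $S$.

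First I would establish $|A| \ge f(d,k)$. Every vertex of $A$ has all $d$ of its neighbours inside $A \cup S$, whence $|A| + |S| - 1 \ge d$, i.e. $|A| \ge d+1-\kappa \ge d+2-k$; for $k \ge 3$ this is already $f(d,k)$. For $k=2$ we have $\kappa = 1$ and this only gives $|A| \ge d$, so I would sharpen it: if $|A| = d$ then $A \cup S$ induces $K_{d+1}$, which forces the cut vertex to have all $d$ of its neighbours in $A$ and none in $B$, contradicting the connectedness of $G$; hence $|A| \ge d+1 = f(d,2)$. Writing $f := f(d,k)$, since $|A| + |B| = n - \kappa \le n-1$ and $|B| \ge |A| \ge f$, both $|A|$ and $|B|$ lie in $[f,\, n-1-f] \subseteq [f,\, n-f]$; in particular $n \ge 2f+1$, so this interval is non-degenerate (and if $n \le 2f$ there is no such cut, so the theorem holds trivially).

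Next I would pass to whichever of $A, B$ is ``cheaply attached'' to $S$. Counting edges incident to $S$, $e(A,S) + e(B,S) = \sum_{s \in S}\bigl(d - \deg_S(s)\bigr) \le d\kappa \le d(k-1)$, so one of these, say $e(W,S)$ with $W \in \{A,B\}$, is at most $\tfrac12 d(k-1)$; and because $e(A,B) = 0$, the total number $\mu$ of edges from $W$ to $V(G)\setminus W$ equals $e(W,S) \le \tfrac12 d(k-1)$. Now apply Haemers' eigenvalue interlacing to the quotient matrix of the two-part partition $\{W,\, V(G)\setminus W\}$: this $2\times 2$ matrix has constant row sums $d$, so its eigenvalues are $d$ and $d - \dfrac{\mu n}{|W|\,(n-|W|)}$, and interlacing gives $\lambda_2(G) \ge d - \dfrac{\mu n}{|W|\,(n-|W|)}$. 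Since $t \mapsto t(n-t)$ is concave, it is minimised over $[f, n-f]$ at the endpoints, so $|W|(n-|W|) \ge f(n-f)$; combining this with $\mu \le \tfrac12 d(k-1)$ yields $\lambda_2(G) \ge d - \dfrac{(k-1)dn}{2f(n-f)}$, which contradicts the assumed strict inequality. Hence $\kappa(G) \ge k$.

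I expect the genuine difficulty to be concentrated in the sharp lower bound $|A| \ge f(d,k)$ in the boundary case $k = 2$: getting $f(d,2) = d+1$ rather than the naive $d$ requires a small but careful local analysis of the cut vertex, and this is precisely the place where the definition of $f(d,k)$ splits into two cases. Everything else --- the reduction to the connected case, the averaging step producing $W$, the two-part quotient matrix with its explicit eigenvalues, interlacing, and the concavity estimate for $t(n-t)$ --- is routine once the structural configuration $V(G) = A \sqcup S \sqcup B$ is in hand.
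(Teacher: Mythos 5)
Your proof is correct. Note that the paper only quotes this proposition from Abiad et al.\ without reproducing a proof, so there is nothing internal to compare against; your argument --- the lower bound $|A|\geq f(d,k)$ (with the extra $K_{d+1}$ analysis when $k=2$), the averaging step giving a side $W$ with $e(W,S)\leq\tfrac12 d(k-1)$, the $2\times 2$ quotient matrix with second eigenvalue $d-\mu n/(|W|(n-|W|))$, interlacing, and the concavity of $t(n-t)$ on $[f,n-f]$ --- is exactly the standard route by which this bound is established in the cited source, and every step checks out.
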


\begin{prop}{\rm\cite[Liu]{liu-AMC}}
Let $k$ be an integer and $G$ be a $d$-regular graph of order $n$ with $d \geq k \geq 2$. Set $\beta = \lceil \frac{1}{2}(d+1+\sqrt{(d+1)^2}-2(k-1)d)\rceil$ and
$$\varphi(d,k) =
\begin{cases}
(d+1)(n-d -1)& \mbox{if $k=2$}\\
(d-k + 2)(n-d+k-2)& \mbox{if $k \geq 3$ and $d \leq 2k-4$}\\
\beta(n-\beta)& \mbox{if $k \geq 3$ and $d > 2k-4$}.
\end{cases}
$$
If $\lambda_2(G) < d- \frac{(d-1)nd}{2\varphi(d,k)}$, then $\kappa(G) \geq k$.
\end{prop}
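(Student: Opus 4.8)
\emph{The plan} is to prove the contrapositive: assuming $\kappa(G)\le k-1$, I will produce a lower bound on $\lambda_2(G)$ that contradicts the hypothesis. Let $S$ be a minimum vertex cut, so that $s:=|S|\le k-1$, while $s\ge1$ because $G$ is connected; let $C_1,\dots,C_r$ ($r\ge2$) be the components of $G-S$. Grouping these components, write $V(G)\setminus S=V_1\cup V_2$ with $V_1,V_2$ nonempty and $e(V_1,V_2)=0$, and put $n_1=|V_1|$, $n_2=|V_2|=n-s-n_1$; I keep the freedom to choose this grouping, and in particular to take $V_1$ to be a single (smallest) component, in order to sharpen what follows. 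The eigenvalue input is interlacing for the quotient matrix of $A(G)$ attached to the partition $\{V_1,\,V(G)\setminus V_1\}$: writing $e=e(V_1,S)$, which equals $e(V_1,V(G)\setminus V_1)$ since $e(V_1,V_2)=0$, and using $d$-regularity, that quotient matrix is
\begin{equation*}
\begin{pmatrix} d-\tfrac{e}{n_1} & \tfrac{e}{n_1}\\[1mm] \tfrac{e}{\,n-n_1\,} & d-\tfrac{e}{\,n-n_1\,}\end{pmatrix},
\end{equation*}
whose eigenvalues are $d$ and $d-\tfrac{en}{n_1(n-n_1)}$, so interlacing gives $\lambda_2(G)\ge d-\dfrac{e(V_1,S)\,n}{n_1(n-n_1)}$. (Using instead the $3\times3$ quotient for $\{V_1,S,V_2\}$ gives a slightly stronger start, at the price of a cubic quotient polynomial.) It remains to bound $e(V_1,S)$ above and $n_1(n-n_1)$ below over every structure a vertex cut of order $\le k-1$ can produce.

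Two elementary facts carry the combinatorial part. (i) Every vertex of a component $C_i$ of $G-S$ has all $d$ of its neighbours inside $C_i\cup S$, so $|C_i|\ge d-s+1$; moreover if $|C_i|+s=d+1$ then $C_i\cup S\cong K_{d+1}$, which uses up every edge at $S$ and leaves $S$ incident to no other component, contradicting that $S$ is a cut (the graph $K_{d+1}$ itself being dispatched separately). Applying (i) to the components inside $V_1$ and inside $V_2$ confines $n_1$ to an explicit interval; whether this interval descends to values $n_1\le d$, so that a clique-like smallest component is admissible, or not, is exactly the alternative $d\le 2k-4$ versus $d>2k-4$, and in the first case it gives $\varphi(d,k)=(d-k+2)(n-d+k-2)$. (ii) Since $S$ is a \emph{minimum} cut, every vertex of $S$ has a neighbour in $V_1$ and a neighbour in $V_2$; together with $e(V_1,S)+e(V_2,S)+2e(S)=sd$ this forces $e(V_1,S)\le s(d-1)$, and also $e(V_1,S)\le sn_1$ since each vertex of $V_1$ has at most $s$ neighbours in $S$. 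Feeding these bounds and the size constraints into $\lambda_2(G)\ge d-\tfrac{e(V_1,S)n}{n_1(n-n_1)}$ and maximising the right-hand factor over the admissible pairs $(s,n_1)$ — treating $n_1$ first as a continuous variable and then rounding — reduces, in the principal regime $d>2k-4$, to a quadratic in $n_1$ whose relevant root, rounded up, is $\beta=\lceil\tfrac12(d+1+\sqrt{(d+1)^2-2(k-1)d})\rceil$, giving $\varphi(d,k)=\beta(n-\beta)$; the extremal value of $e(V_1,S)$ on this part of the boundary is what installs the coefficient $\tfrac{d(d-1)}{2}$, and the resulting inequality contradicts $\lambda_2(G)<d-\tfrac{(d-1)nd}{2\varphi(d,k)}$.

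\emph{The main obstacle} will be the optimisation rather than the eigenvalue step. One has to determine, for each pair $(s,n_1)$ genuinely realisable by a minimum cut of order $\le k-1$ — not merely consistent with the crude size and edge inequalities — which configuration maximises $e(V_1,S)/[n_1(n-n_1)]$, and to check that the integrality rounding producing $\beta$ costs nothing; this case analysis is precisely what is recorded by the three-part definition of $\varphi$. I would carry it out in parallel with a matching construction — a $d$-regular graph of order $n$ with $\kappa(G)=k-1$ attaining (or asymptotically attaining) the bound in each regime — both to be sure the estimates on $e(V_1,S)$ and $n_1(n-n_1)$ cannot be tightened and to isolate the few small-order exceptions ($n$ near $2d$, and $G=K_{d+1}$) that need a separate argument.
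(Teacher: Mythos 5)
The paper only \emph{quotes} this proposition from Liu, Lai, Tian and Wu and supplies no proof, so there is nothing in the source to compare your argument against step by step. Your template --- contrapositive, the $2\times 2$ quotient matrix of the partition $\{V_1,V(G)\setminus V_1\}$, interlacing to get $\lambda_2(G)\ge d-\frac{e(V_1,S)\,n}{n_1(n-n_1)}$, then combinatorial control of numerator and denominator --- is indeed the standard machinery of that line of work, so the approach is the right one. But as written the proposal has concrete gaps, not just omitted routine detail.

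First, the edge bound you actually derive, $e(V_1,S)\le s(d-1)\le (k-1)(d-1)$, does not yield the numerator $\tfrac{d(d-1)}{2}$ in the stated bound once $k-1>d/2$ (and $k$ may be as large as $d$). What is needed is to take $V_1$ to be the union of components receiving at most half of the edges leaving $S$, so that $e(V_1,S)+e(V_2,S)+2e(S)=sd$ gives $e(V_1,S)\le \tfrac{sd}{2}\le\tfrac{(k-1)d}{2}\le\tfrac{(d-1)d}{2}$; this choice conflicts with your other choice (``take $V_1$ to be a single smallest component''), and reconciling the two is exactly where the three branches of $\varphi$ live. Relatedly, your Fact (i) overreaches: if $|C_i|=d+1-s$ with $s\ge 2$, every vertex of $C_i$ is forced to be adjacent to all of $C_i\cup S$, but the vertices of $S$ need not be mutually adjacent, so $C_i\cup S$ need not be $K_{d+1}$ and $S$ can still meet other components; the safe conclusion is only $|C_i|\ge d-s+1$ (the strict improvement to $d+1$ is special to $s=1$, i.e.\ to the case $k=2$). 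Second, the whole optimisation that turns these inequalities into $n_1(n-n_1)\ge\varphi(d,k)$ --- in particular deriving $\beta$ as the ceiling of the larger root of $x(d+1-x)=\tfrac{(k-1)d}{2}$ from the counting inequality $e(V_1,S)\ge n_1(d+1-n_1)$ valid when $n_1\le d$, excluding the smaller root, and checking that the concave function $x(n-x)$ is minimised at the admissible endpoint --- is announced (``I would carry it out\ldots'') rather than carried out, and that case analysis is the actual content of the proposition. Until it is executed, the argument establishes the method but not the stated bound.
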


Hong et al. \cite{hong-xia-lai} found their bound for $\lambda_2(G)$ improving the previous two ones.

\begin{prop}{\rm\cite[Hong et al.]{hong-xia-lai}}
Let $k$ be an integer and $G$ be a $d$-regular graph of order $n$ with $d \geq k\geq 2$. If $\lambda_2(G) < d - \frac{(k-1)nd}{(n-k+1)(k-1)+4(d-k+2)(n-d-1)}$, then $\kappa(G) \geq k$.
\end{prop}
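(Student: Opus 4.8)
\noindent The plan is to argue by contraposition: assuming $\kappa(G)\le k-1$, I will show
\[
\lambda_2(G)\ \ge\ d-\frac{(k-1)nd}{(n-k+1)(k-1)+4(d-k+2)(n-d-1)},
\]
contradicting the hypothesis. First fix a vertex cut $S$ with $s:=|S|\le k-1$ and write $V(G)=V_1\cup V_2\cup S$, where $V_1$ is a union of components of $G-S$ and $V_2$ collects the remaining vertices, so that $V_1,V_2\neq\emptyset$ and no edge joins $V_1$ to $V_2$; set $n_i=|V_i|$ and let $e_i$ be the number of edges between $V_i$ and $S$. Two elementary facts carry all the combinatorial weight: (i) every vertex of $V_i$ has at most $s$ neighbours in $S$, hence at least $d-s$ inside $V_i$, so $n_i\ge d-s+1$; since $n_1+n_2=n-s$ this forces $d-s+1\le n_i\le n-d-1$ and therefore $n_1n_2\ge(d-s+1)(n-d-1)$, a product with fixed sum being least at the ends of its admissible interval; (ii) counting edges incident with $S$ gives $e_i\le sn_i$ and $e_1+e_2\le ds$.

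The spectral ingredient is eigenvalue interlacing for the quotient matrix $B$ of $A(G)$ corresponding to the partition $\{V_1,V_2,S\}$. Since $G$ is $d$-regular, $\mathbf 1$ descends to an eigenvector of $B$ with eigenvalue $d$, so $\lambda_1(B)=d$; as $B$ has real eigenvalues (it is similar to the symmetrically normalised quotient), interlacing yields $\lambda_2(G)\ge\lambda_2(B)$. Writing $\mu_2=d-\lambda_2(B)$ and $\mu_3=d-\lambda_3(B)$ (so $0\le\mu_2\le\mu_3$), the matrix $B-dI$ has zero row sums and a very sparse shape, and reading off its trace and the sum of its $2\times2$ principal minors gives
\[
\mu_2+\mu_3=\frac{e_1}{n_1}+\frac{e_2}{n_2}+\frac{e_1+e_2}{s},\qquad \mu_2\mu_3=\frac{e_1e_2\,n}{n_1n_2\,s}.
\]
Hence $\mu_2$ is the smaller root of $x^2-(\mu_2+\mu_3)x+\mu_2\mu_3=0$ and $\lambda_2(G)\ge d-\mu_2$, an explicit function of $n_1,n_2,s,e_1,e_2$.

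The core of the proof is the estimate
\[
\mu_2\ \le\ \frac{dsn}{s(n-s)+4n_1n_2}.
\]
Granting it, one replaces $n_1n_2$ by $(d-s+1)(n-d-1)$ (which only shrinks the denominator, hence weakens the bound), notes that $\dfrac{dsn}{s(n-s)+4(d-s+1)(n-d-1)}$ is increasing in $s$ so that $s=k-1$ is the worst case, and substitutes to obtain exactly the claimed inequality. To prove the displayed estimate I would use that, as a function of $T:=\mu_2+\mu_3$ and $P:=\mu_2\mu_3$, $\mu_2=\tfrac12\bigl(T-\sqrt{T^2-4P}\bigr)$ is increasing in $P$, decreasing in $T$, and homogeneous of degree one under radial scaling of $(e_1,e_2)$; this reduces the maximisation over the admissible region $\{0\le e_i\le sn_i,\ e_1+e_2\le ds\}$ to its outer faces. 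A case analysis then finishes: when the binding constraint is $e_1+e_2=ds$ the worst split is $e_1=e_2=ds/2$ (for which $(\mu_2+\mu_3)^2-4\mu_2\mu_3=d^2\bigl(1+\tfrac{s^2(n_1-n_2)^2}{4n_1^2n_2^2}\bigr)$, and clearing the radical reduces $\mu_2\le\tfrac{dsn}{s(n-s)+4n_1n_2}$ to a polynomial inequality, tight when $n_1=n_2$, where $\mu_2=\tfrac{ds}{n-s}=\tfrac{dsn}{s(n-s)+4((n-s)/2)^2}$), while when $n_i=d-s+1$ for some $i$ the constraint $e_i=sn_i$ is forced and a similar but slacker estimate applies.

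\medskip\noindent
The genuine obstacle is this last step — controlling $\mu_2$ uniformly over the polytope of admissible pairs $(e_1,e_2)$ and over the possible side sizes $n_1,n_2$. The expression for $\mu_2$ carries a square root, so the comparison with the target must be carried out by clearing radicals and a somewhat delicate sign analysis, and it splits into cases according to the geometry of the cut: whether some side $V_i$ is a minimum-size, (nearly) complete component of $G-S$ or the two sides are of comparable size. Checking the monotonicities used to pass from $n_1n_2$ and from $s$ to their extreme admissible values is routine by direct comparison, but must be done so that all of these relaxations push the final inequality in the same direction.
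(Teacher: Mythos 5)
First, note that the paper you are working from does not prove this statement at all: it is quoted verbatim from Hong, Xia and Lai \cite{hong-xia-lai}, so there is no internal proof to compare against. Your overall architecture is nevertheless the standard (and, in essence, the correct) one for results of this type: take a cut $S$ with $s=|S|\le k-1$, form the quotient matrix of the partition $\{V_1,V_2,S\}$, use interlacing to get $\lambda_2(G)\ge\lambda_2(B)$, and read off $\mu_2+\mu_3$ and $\mu_2\mu_3$ from the trace and the $2\times2$ principal minors of $dI-B$ (both of your formulas are correct, as are the reductions $n_1n_2\ge(d-s+1)(n-d-1)$ and the monotonicity in $s$).

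The genuine gap is exactly where you flag it, and your proposed repair does not work. On the face $e_1+e_2=ds$ the maximum of $\mu_2$ is \emph{not} attained at $e_1=e_2=ds/2$ when $n_1\neq n_2$. Concretely, take $d=10$, $s=1$, $n_1=10$, $n_2=39$, $n=50$: at $e_1=e_2=5$ one computes $\mu_2\approx0.31065$, while at $e_1=4.9$, $e_2=5.1$ one gets $\mu_2\approx0.31075$, strictly larger (and essentially equal to the target $\tfrac{dsn}{s(n-s)+4n_1n_2}=\tfrac{500}{1609}\approx0.310752$). So the estimate $\mu_2\le\tfrac{dsn}{s(n-s)+4n_1n_2}$ is true and nearly attained at an \emph{asymmetric} point, and verifying it only at the symmetric split proves nothing about the whole face; your ``similar but slacker estimate'' for the faces $e_i=sn_i$ is likewise unsubstantiated. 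The standard way to close this (and the way such lemmas are actually proved in \cite{abi-bri-mar-O,hong-xia-lai}) is not to locate the maximizer at all: with $\theta=\tfrac{dsn}{s(n-s)+4n_1n_2}$ one shows directly that $q(\theta)=\theta^2-(\mu_2+\mu_3)\theta+\mu_2\mu_3\le0$ for every admissible $(e_1,e_2,n_1,n_2)$, which forces $\mu_2\le\theta\le\mu_3$; this is a single polynomial inequality (no radicals) in which the constraints $e_i\le sn_i$, $e_1+e_2\le ds$ and $n_i\ge d-s+1$ can be applied monotonically. As written, your argument establishes the claimed bound only at points that are not the worst case, so the key inequality --- and hence the proposition --- remains unproved.
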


As we've already seen, the above bounds for $\lambda_2(G)$ are not sharp. Then, the aim of the present paper is to find  the best upper bounds on the second largest eigenvalues of regular graphs guaranteeing a desired vertex-connectivity. In other words, we investigate the challenging question asked by Mohar and O (private communication) and  alluded to briefly by Cioab\u{a} \cite{cio-LAA,cio-CMJ} and raised formally by Abiad et al. \cite{abi-bri-mar-O}.

\begin{prob}
For a $d$-regular simple graph or multigraph $G$ and for $1 \leq t \leq d-1$, what is the best upper bound for $\lambda_2(G)$ which guarantees that $\kappa(G) \geq  t + 1$ or that $\kappa'(G) \geq  t+1$?
\end{prob}

Remark, for the edge-connectivity $\kappa'(G) $, that Cioab\u{a} \cite{cio-LAA} proved the cases $t = 1,2$ in 2010 and O et al. \cite{O-arxiv}  settled down for all $t \geq 3$ in 2018. However, it seems unlikely to solve this problem for vertex-connectivity once and for all.  As a starting point, we capture this problem in the case $t =1$, and characterize all families of extremal graphs. 

To describe our results, we introduce some notations and terminology. As usual, let  $C_n$, $K_n$ and $M_n$ be  the {\it cycle}, the {\it complete graph} and the {\it perfect matching} with order $n$.  The {\it sequential join} $G_1 \vee \cdots \vee G_k $ of graphs $G_1,\ldots,G_k$ is the graph formed by taking one copy of each graph and adding additional edges from each vertex of $G_i$ to all vertices of $G_{i+1}$, for $1 \leq i \leq k-1$.

For $S, T \subseteq V(G)$, let $E(S, T) = \{(u, v)|u \in S, v \in T, (u, v) \in E(G)\}$ be the set of edges from $S$ to $T$. The graph $G-S$ is derived from $G$ by deleting the vertices of $S$ and the edges incident with the vertices in $S$. If $S=\left\{v\right\}$, we denote by $G-v$ for short. For two integers $d\geq3$ and $1\leq c\leq d-1$, we define the following set $\mathscr{G}_{d,c}$ and graph $G_{d,c}$ relating to our main results.

\begin{itemize}
\item[$\bullet$]
$\mathscr{G}_{d,c}$ is the set of connected $d$-regular graphs $G$ with a cut vertex (say $u$) such that $G-u$ has a component $G_1$ with $|E(u,V(G_1))|=c$.
\item[$\bullet$]
Let $d\geq3$ be odd. Define
$$ G_{d,c}=
\begin{cases}
K_2 \vee \overline{M_{d-1}} \vee K_1 \vee K_1 \vee \overline{M_{d-1}} \vee K_2 & \mbox{if $c=1$ or $c=d-1$};\\
\overline{M_{d+2-c}} \vee \overline{\mathcal{C}_{c}} \vee K_1 \vee \overline{M_{d-c}} \vee K_{c+1}& \mbox{if $c \in (2, d-2]$ is odd};\\
K_{d+1-c} \vee \overline{M_{c}} \vee K_1 \vee \overline{\mathcal{C}_{d-c}} \vee \overline{M_{c+2}}& \mbox{if $c \in [2, d-2)$ is even},
\end{cases}
$$
where $\mathcal{C}_c = C_{c_1} \cup \cdots \cup C_{c_s}$ is the union of disjoint cycles $C_{c_i}$ and $\sum_{i=1}^sc_i = c$.
\item[$\bullet$]
Let $d\geq4$ be even. Clearly, $c \in [2,d-2]$ is even, and define $$G_{d,c}=K_{d+1-c} \vee \overline{M_{c}} \vee K_1 \vee \overline{M_{d-c}} \vee K_{c+1}.$$
\end{itemize}
Remark that $G_{d,1}=G_{d,d-1}$ and $G_{d,c} \in \mathscr{G}_{d,c}$.

We are now in the stage to present the main result of this paper.

\begin{thm}\label{main2}
Let $d\geq3$ and $G$ be a connected $d$-regular graph.
\begin{itemize}
\item[{\rm (i)}]
Let $d=3$. If $\lambda_{2}(G)\leq\lambda_{2}(G_{d,1})$ and $G \neq G_{d,1}$, then $\kappa(G) \geq 2$.
\item[{\rm (ii)}]
Let $d \geq 4$ be even. If $\lambda_{2}(G)\leq\lambda_{2}(G_{d,2\lfloor\frac{d}{4}\rfloor})$ and $$G \neq G_{d,2\lfloor\frac{d}{4}\rfloor} = K_{2\lfloor\frac{d+2}{4}\rfloor+1} \vee \overline{M_{2\lfloor\frac{d}{4}\rfloor}} \vee K_1 \vee M_{2\lfloor\frac{d+2}{4}\rfloor} \vee K_{2\lfloor\frac{d}{4}\rfloor+1},$$ then $\kappa(G) \geq 2$.
\item[{\rm (iii)}]
Let  $d \geq 5$ be odd and $\mathcal{C}_c$ be defined as above.
\begin{itemize}
\item[{\rm (a)}]
For odd $\frac{d-1}{2}$, if $\lambda_{2}(G)\leq\lambda_{2}(G_{d,\frac{d-1}{2}})$ and $$G \neq G_{d,\frac{d-1}{2}}=\overline{M_{\frac{d+5}{2}}} \vee \overline{\mathcal{C}_{\frac{d-1}{2}}} \vee K_1 \vee \overline{M_{\frac{d+1}{2}}} \vee K_{\frac{d+1}{2}},$$ then $\kappa(G) \geq 2$.
\item[{\rm (b)}]
For even $\frac{d-1}{2}$, if $\lambda_{2}(G)\leq\lambda_{2}(G_{d,\frac{d-1}{2}})$ and $$G \neq G_{d,\frac{d-1}{2}}= K_{\frac{d+3}{2}} \vee \overline{M_{\frac{d-1}{2}}} \vee K_1 \vee \overline{\mathcal{C}_{\frac{d+1}{2}}} \vee \overline{M_{\frac{d+3}{2}}},$$ then $\kappa(G) \geq 2$.
\end{itemize}
\end{itemize}
\end{thm}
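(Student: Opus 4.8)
The plan is to prove the contrapositive: if $G$ is a connected $d$-regular graph with $\kappa(G)=1$, then $\lambda_{2}(G)\ge\lambda_{2}(G_{d,c^{\ast}})$, with equality only when $G=G_{d,c^{\ast}}$ (i.e.\ when $G$ lies in the family $G_{d,c^{\ast}}$), where $c^{\ast}$ is the value appearing in the statement. A cut vertex $u$ of $G$ places $G$ in some class $\mathscr{G}_{d,c}$: take $G_{1}$ a component of $G-u$ and put $c=|E(u,V(G_{1}))|$, so $1\le c\le d-1$ (and, by the parity remark below, $c$ is even whenever $d$ is even). Thus the theorem reduces to two claims: (1) for each fixed admissible $c$, every $G\in\mathscr{G}_{d,c}$ has $\lambda_{2}(G)\ge\lambda_{2}(G_{d,c})$ with equality iff $G=G_{d,c}$; and (2) $\min_{c}\lambda_{2}(G_{d,c})$ is attained exactly at $c=c^{\ast}$, which a parity count identifies as $2\lfloor d/4\rfloor$ for even $d$ and $(d-1)/2$ for odd $d$, the latter splitting into subcases (a) and (b) according to the parity of $(d-1)/2$. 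The case $d=3$ is small and can be finished directly, e.g.\ from the Cioab\u{a}--Gu bound recalled above.

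For claim (1), fix $G\in\mathscr{G}_{d,c}$ with cut vertex $u$; write $V(G)=\{u\}\cup V_{1}\cup V_{2}$, where $V_{1}=V(G_{1})$ and $V_{2}$ is the union of the vertex sets of the remaining components of $G-u$, and set $n_{i}=|V_{i}|$, $c_{1}=c$, $c_{2}=d-c$, so $|E(u,V_{i})|=c_{i}$. A degree count shows each $V_{i}$ has a vertex with no neighbour in $\{u\}$, hence $n_{i}\ge d+1$; moreover $G[V_{i}]$ has exactly $c_{i}$ vertices of degree $d-1$ and $n_{i}-c_{i}$ of degree $d$, so $dn_{i}-c_{i}$ is even (this is the parity remark: when $d$ is even it forces $c_{i}$ even) and $\lambda_{1}(G[V_{i}])\ge 2|E(G[V_{i}])|/n_{i}=d-c_{i}/n_{i}$. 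Applying Cauchy interlacing to the induced subgraph on $V_{1}\cup V_{2}$, which has no $V_{1}$--$V_{2}$ edges, gives $\lambda_{2}(G)\ge\min_{i}\lambda_{1}(G[V_{i}])\ge\min_{i}(d-c_{i}/n_{i})$; so $\lambda_{2}(G)<\lambda_{2}(G_{d,c^{\ast}})$ forces $n_{i}<c_{i}/(d-\lambda_{2}(G_{d,c^{\ast}}))$ for both $i$, bounding $|V(G)|$. From here one squeezes: (a) $\lambda_{2}(G)$ is monotone enough in $n_{1},n_{2}$ that a minimizer must have each $n_{i}$ as small as parity allows — $n_{i}=d+1$ when $c_{i}$ is even, $n_{i}=d+2$ when $c_{i}$ is odd (for odd $d$ this occurs on exactly one of the two sides, whichever receives an odd number of edges from $u$); and (b) for those fixed sizes, $G[V_{i}]$ is forced, among all graphs with its degree sequence, to be the one whose complement is a disjoint union of single edges and, when $n_{i}=d+2$, cycles — that is, $G[V_{i}]$ is of type $K_{\bullet}\vee\overline{M_{\bullet}}$ (if $c_{i}$ is even) or $\overline{M_{\bullet}}\vee\overline{\mathcal{C}_{c_{i}}}$ (if $c_{i}$ is odd). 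Reassembling $G$ from $H_{i}=G[V_{i}\cup\{u\}]$ under (a), (b) produces exactly the displayed forms of $G_{d,c}$, and a tight treatment of the inequalities gives the equality characterization (note that varying the cycle lengths of $\mathcal{C}_{c_{i}}$ changes $G$ but not $\lambda_{2}(G)$, which is why $G_{d,c}$ is a whole family).

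For claim (2), each $G_{d,c}$ is a sequential join of cliques, cocktail-party graphs and complements of unions of cycles, so the partition of its vertex set into the join-blocks is equitable and $\lambda_{2}(G_{d,c})$ is the second-largest root of the characteristic polynomial of the bounded-size quotient matrix $B_{c}$ (which is independent of the cycle lengths in $\mathcal{C}_{c}$). The plan is to show that, as a function of $c$, this root is unimodal with minimum at the most balanced admissible split; for even $d$ this is $c^{\ast}=2\lfloor d/4\rfloor$, and for odd $d$ it is $c^{\ast}=(d-1)/2$, with the choice bifurcating according to whether $(d-1)/2$ is odd or even, i.e.\ which of the two sides carries the $\overline{\mathcal{C}}$-block, yielding subcases (a) and (b). Combining (1) and (2): any connected $d$-regular $G$ with a cut vertex has $\lambda_{2}(G)\ge\lambda_{2}(G_{d,c^{\ast}})$ with equality only for $G=G_{d,c^{\ast}}$, which is Theorem~\ref{main2}.

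I expect the main obstacle to be the sharp step of claim (1). The interlacing bound merely localizes the problem to graphs of bounded order; turning it into the exact inequality $\lambda_{2}(G)\ge\lambda_{2}(G_{d,c})$ requires a delicate eigenvalue comparison — for fixed $c$ and fixed sizes, replacing $G[V_{i}]$ by the claimed extremal graph must not increase $\lambda_{2}(G)$ (in particular a path component in $\overline{G[V_{i}]}$ must be shown strictly worse than a cycle), and decreasing $n_{i}$ within the parity constraint must not increase it either — all with exact control of the equality cases and with the parity bookkeeping that distinguishes the $\overline{M}$- and $\overline{\mathcal{C}}$-type blocks carried throughout. The cross-$c$ comparison in claim (2) is elementary in principle but laborious, since the shape of $B_{c}$ varies with the parities of $c$ and $d-c$.
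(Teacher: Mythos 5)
Your high-level skeleton matches the paper's: reduce to the classes $\mathscr{G}_{d,c}$ determined by a cut vertex, prove a sharp per-class bound $\lambda_2(G)\ge\lambda_2(G_{d,c})$ with equality characterization, and then compare $\lambda_2(G_{d,c})$ across admissible $c$ to identify $c^\ast$. But the proposal has a genuine gap precisely where you flag ``the main obstacle'': the sharp step of your Claim (1) is not proved, and the route you sketch for it is not the one that works. Your interlacing bound $\lambda_2(G)\ge\min_i\lambda_1(G[V_i])\ge\min_i(d-c_i/n_i)$ sees only the average degree of the components of $G-u$; it can localize the order of one side but cannot produce the exact inequality $\lambda_2(G)\ge\lambda_2(G_{d,c})$, let alone the equality characterization, and the subsequent ``squeeze'' (replacing $G[V_i]$ by a claimed extremal graph without increasing $\lambda_2(G)$, comparing path versus cycle components of the complement, shrinking $n_i$ within parity) is exactly the hard direct eigenvalue comparison you admit you have not carried out. (A small additional slip: $\lambda_2(G)<\lambda$ only forces $n_i<c_i/(d-\lambda)$ for \emph{one} index $i$, the one attaining the minimum, not for both.)

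The paper's missing ingredient is a refinement of your two-part split: partition $V(G)$ into five parts $V(G_1')\cup V_1\cup\{u\}\cup V_2\cup V(G_2')$, where $V_i=N(u)\cap V(G_i)$ and $G_i'=G_i-V_i$. The resulting $5\times5$ quotient matrix $B_3$ is tridiagonal with constant row sums $d$ and depends only on the four parameters $p=|V(G_1')|$, $q=|V(G_2')|$, $r=|E(V(G_1'),V_1)|$, $t=|E(V(G_2'),V_2)|$. Quotient interlacing (Lemma \ref{interlacing}, valid even though this partition is not equitable) gives $\lambda_2(G)\ge\lambda_2(B_3)$, the tridiagonal reduction (Lemma \ref{tridiagonal matrix}) converts this to the largest root of a $4\times4$ matrix, and two elementary monotonicity facts (decreasing in $r,t$; increasing in $p,q$ after substituting $r=cp$, $t=(d-c)q$) pin down the minimizing parameters, which are realized exactly by $G_{d,c}$. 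This replaces your structural perturbation argument by root-monotonicity of an explicit low-degree polynomial, and it is also what makes the equality analysis tractable. Your Claim (2) is likewise only a plan (``unimodal with minimum at the most balanced split''); the paper proves it by writing the $c$-dependence of the quartics $f_1,f_2$ as $c(d-c)$ times a factor positive and increasing for $x>0$, plus a separate comparison of $c=1$ against $c=2$ via $f_0$. Without these (or equivalent) mechanisms, the proposal does not constitute a proof.
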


\section{Preparation}

Let $M_n(F)$ be the set of all $n$ by $n$ matrices over a field $F$. A matrix $M = [m_{ij}] \in M_n(F)$ is {\it tridiagonal} if $m_{ij} = 0$, whenever $|i - j| > 1$.

\begin{lem}{\rm \cite{bro-coh-neu,horn-joh}}\label{tridiagonal matrix}
Let $M$ be a non-negative tridiagonal matrix as follows:
\begin{center}$A=$
$\left(\begin{array}{ccccc}
 a_{0}&b_{0}&&&0\\
 c_{1}&a_{1}&b_{1}&&\\
 &\ddots&\ddots&\ddots&\\
 & &\ddots&\ddots&b_{n-1}\\
 0 & &&c_{n}&a_{n}
  \end{array}\right)$,
   \end{center}
Assume each row sum of $M$ equals $d$. If $M$ has eigenvalues $\lambda_{1}=d,\lambda_{2},...,\lambda_{n+1}$ indexed in non-increasing order, then the $n\times n$ matrix
\begin{center}$\widetilde{A}=$
$\left(\begin{array}{ccccc}
 d-b_{0}-c_{1}&b_{1}&&&0\\
 c_{1}&d-b_{1}-c_{2}&b_{2}&&\\
 &c_{2}&\ddots&\ddots&\\
 & &\ddots&\ddots&b_{n-1}\\
 0 & &&c_{n-1}&d-b_{n-1}-c_{n}
  \end{array}\right)$
   \end{center}
has eigenvalues $\lambda_{2},\lambda_{3},...,\lambda_{n+1}$.
\end{lem}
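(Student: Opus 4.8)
The plan is to recognise $\widetilde A$ as a matrix realisation of the operator that $M$ induces on the quotient space $\mathbb{R}^{n+1}/\langle\mathbf 1\rangle$, where $\mathbf 1=(1,\dots,1)^{\top}$. Since every row sum of $M$ equals $d$, we have $M\mathbf 1=d\mathbf 1$, so the line $\langle\mathbf 1\rangle$ is $M$-invariant and $M$ acts on it as multiplication by $d$. For any linear operator with an invariant subspace the characteristic polynomial factors through that subspace, so $\det(tI-M)=(t-d)\det(tI-\overline M)$, where $\overline M$ denotes the induced operator on the $n$-dimensional quotient. Consequently the eigenvalues of $\overline M$ are precisely the eigenvalues of $M$ with one copy of $d=\lambda_1$ deleted, that is, $\lambda_2,\dots,\lambda_{n+1}$; it therefore suffices to show that $\widetilde A$ is similar to $\overline M$.

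To realise this similarity explicitly I would use the \emph{difference operator} $\phi\colon\mathbb{R}^{n+1}\to\mathbb{R}^{n}$ defined by $(\phi x)_i=x_{i-1}-x_i$ for $1\le i\le n$, where the coordinates of $\mathbb{R}^{n+1}$ are indexed by $0,1,\dots,n$. Plainly $\ker\phi=\langle\mathbf 1\rangle$, so by rank--nullity $\phi$ is surjective and induces an isomorphism $\overline\phi\colon\mathbb{R}^{n+1}/\langle\mathbf 1\rangle\to\mathbb{R}^{n}$. The crux is the intertwining identity $\phi M=\widetilde A\,\phi$, which I would verify entrywise: comparing the $i$-th coordinates of $\phi(Mx)$ and of $\widetilde A(\phi x)$ for an arbitrary $x$, and using the row-sum relations $a_0+b_0=d$, $c_i+a_i+b_i=d$ $(1\le i\le n-1)$, $c_n+a_n=d$, both sides collapse to the single combination $c_{i-1}x_{i-2}+(a_{i-1}-c_i)x_{i-1}+(b_{i-1}-a_i)x_i-b_ix_{i+1}$, with the obvious truncations at the ends $i=1$ and $i=n$. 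Granting $\phi M=\widetilde A\,\phi$, passing to the quotient yields $\overline\phi\,\overline M=\widetilde A\,\overline\phi$, hence $\widetilde A=\overline\phi\,\overline M\,\overline\phi^{-1}$; so $\widetilde A$ is similar to $\overline M$ and has characteristic polynomial $\det(tI-M)/(t-d)$ with roots $\lambda_2,\dots,\lambda_{n+1}$, as required.

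The one point demanding care is the bookkeeping in the verification of $\phi M=\widetilde A\,\phi$: one must track the index shift between $M$, whose super- and sub-diagonals are $b_0,\dots,b_{n-1}$ and $c_1,\dots,c_n$, and $\widetilde A$, whose super- and sub-diagonals are $b_1,\dots,b_{n-1}$ and $c_1,\dots,c_{n-1}$ and whose diagonal entries are $d-b_{i-1}-c_i$, and the boundary rows $i=1,n$ should be checked separately (they succeed by the same cancellation). I do not expect a genuine obstacle here: working on the quotient, rather than eigenvector-by-eigenvector, avoids all questions about diagonalisability of $M$ or about vanishing off-diagonal entries, and the conclusion becomes a clean identity of characteristic polynomials valid over an arbitrary field, with the hypothesis ``all row sums equal $d$'' being exactly what makes $\langle\mathbf 1\rangle$ invariant and the cancellation go through. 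Alternatively one could expand the two tridiagonal determinants $\det(tI-M)$ and $\det(tI-\widetilde A)$ by the usual three-term recursion and extract the factor $t-d$ directly, but the quotient argument is shorter and explains the role of the hypothesis.
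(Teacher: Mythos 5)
Your proof is correct. Note that the paper does not prove this lemma at all --- it is quoted from \cite{bro-coh-neu,horn-joh} --- so there is no ``paper proof'' to compare against; the usual textbook argument constructs, for each eigenvector $x$ of $M$ with eigenvalue $\lambda\neq d$, the vector of consecutive differences and checks it is a $\lambda$-eigenvector of $\widetilde A$, which runs into multiplicity/diagonalisability issues that your quotient formulation avoids. I checked the intertwining identity $\phi M=\widetilde A\,\phi$ entrywise: for $2\le i\le n-1$ both sides give coefficient $c_{i-1}$ on $x_{i-2}$, $-c_{i-1}+d-b_{i-1}-c_i=a_{i-1}-c_i$ on $x_{i-1}$ (using $c_{i-1}+a_{i-1}+b_{i-1}=d$), $b_i-(d-b_{i-1}-c_i)=b_{i-1}-a_i$ on $x_i$ (using $c_i+a_i+b_i=d$), and $-b_i$ on $x_{i+1}$; the boundary rows $i=1$ and $i=n$ work the same way using $a_0+b_0=d$ and $c_n+a_n=d$. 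Since $\ker\phi=\langle\mathbf 1\rangle$ is $M$-invariant with $M$ acting as $d$ on it, $\chi_M(t)=(t-d)\,\chi_{\overline M}(t)$ and $\widetilde A\cong\overline M$, so $\widetilde A$ has exactly the eigenvalues $\lambda_2,\dots,\lambda_{n+1}$ as multisets. The argument is complete and in fact slightly more general than the stated lemma (non-negativity of $M$ is never used).
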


For the adjacency matrix $A(G)$, an {\it equitable partition} of a graph $G$ is a partition of the vertex set $V(G)$ into parts $V_i$ such that each vertex in $V_i$ has the same number $b_{i,j}$ of neighbors in part $V_j$ for any $j$. Then the matrix $B = (b_{i,j})$ is called the {\it quotient matrix} of $G$ w.r.t. the given partition.

\begin{lem}{\rm \cite{bro-hae}}\label{interlacing}
Let $B$ be the quotient matrix of a graph $G$ w.r.t. the partition $\{V_1,\ldots,V_m\}$. Then the eigenvalues of $B$ interlace the eigenvalues of $G$. Moreover, if this partition is equitable, then each eigenvalue (with multiplicity) of $B$ is an eigenvalue of $G$.
\end{lem}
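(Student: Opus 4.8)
The plan is to realize the quotient matrix as the compression of $A(G)$ onto the subspace spanned by the indicator vectors of the parts, and then invoke the Courant--Fischer min--max principle. Write $A=A(G)$, let $n_i=|V_i|$, and form the $n\times m$ characteristic matrix $S$ whose $i$-th column is the $0/1$ indicator vector of $V_i$. Since the $V_i$ are disjoint and nonempty, the columns of $S$ are orthogonal and $S^{\top}S=D$ with $D=\diag(n_1,\dots,n_m)$. Writing the quotient matrix as $B=D^{-1}S^{\top}AS$, a direct computation gives $b_{i,j}=\frac{1}{n_i}\sum_{u\in V_i}|N(u)\cap V_j|$, the average number of neighbours in $V_j$ of a vertex of $V_i$; in the equitable case this average collapses to the common value $b_{i,j}$ of the definition.

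First I would establish the interlacing statement. The only subtlety is that $B$ need not be symmetric, so I would symmetrize it by the diagonal conjugation $\widetilde B=D^{1/2}BD^{-1/2}=\widehat S^{\top}A\widehat S$, where $\widehat S=SD^{-1/2}$. One checks $\widehat S^{\top}\widehat S=D^{-1/2}S^{\top}SD^{-1/2}=I_m$, so the columns of $\widehat S$ are orthonormal and $\widetilde B$ is a genuine symmetric matrix with the same spectrum as $B$. Thus $\widetilde B$ is exactly the compression of the symmetric matrix $A$ to the $m$-dimensional column space of $\widehat S$, and the claim reduces to the Cauchy interlacing theorem. I would deduce this from Courant--Fischer: restricting the Rayleigh quotient of $A$ to vectors of the form $\widehat S x$ and using that $\widehat S$ is an isometry onto its image yields $\theta_i(A)\ge\eta_i(\widetilde B)\ge\theta_{n-m+i}(A)$ for $1\le i\le m$, which is interlacing.

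It remains to treat the equitable case. Here the average defining $b_{i,j}$ is attained by every vertex of $V_i$, which is precisely the identity $AS=SB$: comparing the $(u,j)$-entries for $u\in V_i$ gives $|N(u)\cap V_j|$ on the left and $b_{i,j}$ on the right, and these agree by equitability. Consequently the column space $\mathrm{col}(S)$ is $A$-invariant; since $S$ has linearly independent columns it is injective, so $Bx=\eta x$ forces $A(Sx)=S(Bx)=\eta(Sx)$ with $Sx\ne 0$, exhibiting $\eta$ as an eigenvalue of $A$. To obtain the sharper ``with multiplicity'' conclusion I would argue at the level of the invariant subspace: in a basis adapted to $\mathrm{col}(S)$ the restriction $A|_{\mathrm{col}(S)}$ is conjugate to $B$, so the characteristic polynomial of $B$ divides that of $A$, and hence every eigenvalue of $B$ occurs among the eigenvalues of $A$ with at least its algebraic multiplicity.

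The main obstacle is not a single deep step but keeping the two halves honest. The interlacing half breaks down if one treats $B$ as if it were symmetric, so the diagonal symmetrization $\widehat S=SD^{-1/2}$ is the crux that converts the problem into the standard Cauchy interlacing theorem. The multiplicity claim in the equitable half likewise requires the invariant-subspace/characteristic-polynomial argument rather than a bare eigenvector transfer, since the latter only produces each eigenvalue of $B$ as some eigenvalue of $A$ without controlling its multiplicity.
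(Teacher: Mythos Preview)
The paper does not give its own proof of this lemma; it is quoted verbatim as a known result from Brouwer and Haemers \cite{bro-hae} and used as a black box. Your argument is correct and is in fact the standard proof one finds in that reference: pass to the symmetrized quotient $\widetilde B=D^{1/2}BD^{-1/2}=\widehat S^{\top}A\widehat S$ with $\widehat S=SD^{-1/2}$ orthonormal, apply Cauchy interlacing via Courant--Fischer, and in the equitable case use $AS=SB$ to exhibit $\mathrm{col}(S)$ as an $A$-invariant subspace on which $A$ acts as (a conjugate of) $B$, whence the characteristic polynomial of $B$ divides that of $A$. There is nothing to compare here beyond noting that you have supplied precisely the argument the cited source contains.
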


Look back to the graph $G_{d,c} \in \mathscr{G}_{d,c}$. Obviously, $G_{d,c}$ contains $2d+4$ vertices for odd $d\geq3$ and contains $2d+3$ vertices for even $d\geq4$. Moreover, $G_{d,c}=G_{d,d-c}$. Note that $G_{d,1}=G_{d,d-1}$ is the graph $X_d$ defined in \cite[(4)]{cio-LAA} and used in \cite[Theorem 1.4]{cio-LAA}, the proof of which implies the following conclusion.

\begin{lem}{\rm \cite{cio-LAA}}\label{cut edge}
Let $d\geq3$ be an odd integer. Then $\lambda_{2}(G_{d,1})(=\lambda_{2}(G_{d,d-1}))$ is the largest root of $f_0(x) = 0$ with $f_0(x) = x^{3}-(d-3)x^{2}-(3d-2)x-2$. Moreover, if $G$ is a $d$-regular graph with cut edges, then $\lambda_{2}(G)\geq\lambda_{2}(G_{d,1})$ with equality if and only if $G=G_{d,1}$.
\end{lem}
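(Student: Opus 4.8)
The plan is to treat the two assertions separately: first compute $\lambda_2(G_{d,1})$ exactly, then prove the extremal inequality and its equality case.

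For $\lambda_2(G_{d,1})$, I would use that the six blocks of the sequential join defining $G_{d,1}$ form an equitable partition, so by Lemma~\ref{interlacing} every eigenvalue of the (tridiagonal, constant-row-sum-$d$) $6\times6$ quotient matrix $B$ is an eigenvalue of $G_{d,1}$. The reflection reversing the order of the six blocks is an automorphism, hence $B$ is centrosymmetric and decomposes into $3\times3$ blocks $B_{+}$ and $B_{-}$ on the reflection-symmetric and reflection-antisymmetric vectors; a direct determinant computation gives $\det(xI-B_{+})=(x-1)(x-d)(x+2)$ and $\det(xI-B_{-})=f_0(x)$. The other $2d-2$ eigenvalues of $G_{d,1}$ come from vectors vanishing off one $\overline{M_{d-1}}$- or $K_2$-block and summing to zero there, so (since $\Spec(\overline{M_{d-1}})=\{d-3,\,0^{(d-1)/2},\,(-2)^{(d-3)/2}\}$ and $\Spec(K_2)=\{1,-1\}$) they all lie in $\{0,-1,-2\}$. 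Because $f_0(1)=4-4d<0$ and $f_0(x)\to\infty$, the largest root $\theta$ of $f_0$ exceeds $1$ and is therefore the second largest eigenvalue of $G_{d,1}$; and $f_0(d)=2d-2>0$ with $f_0$ increasing on $[d,\infty)$ gives $\theta<d$. This establishes $\lambda_2(G_{d,1})=\theta$.

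For the extremal part, let $G$ be connected $d$-regular with cut edge $uv$, and let $A\ni u$, $B\ni v$ be the vertex sets of the two components of $G-uv$. Then $G[A]$ and $G[B]$ are connected, in $G[A]$ the vertex $u$ has degree $d-1$ and every other vertex has degree $d$ (and symmetrically for $B$), and a degree-sum parity count forces $|A|,|B|\ge d+2$. By Courant--Fischer, it is enough to produce vectors $x_A$ supported on $A$ and $x_B$ supported on $B$ so that the Rayleigh quotient of $A(G)$ on $\mathrm{span}\{x_A,x_B\}$ is at least $\theta$; since $uv$ is the only $A$--$B$ edge, the single cross-term is $(x_A)_u(x_B)_v$, and expanding the $2\times2$ form shows this works as soon as $x_A^{T}A(G[A])x_A-\theta\|x_A\|^2\ge (x_A)_u^2$ and the analogue on the $B$-side hold --- equivalently, as soon as $\lambda_{\max}\!\bigl(A(G[A])-e_ue_u^{T}\bigr)\ge\theta$ and $\lambda_{\max}\!\bigl(A(G[B])-e_ve_v^{T}\bigr)\ge\theta$.

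Everything thus reduces to the local statement that is the crux: if $H$ is connected with exactly one vertex $w$ of degree $d-1$ (all others of degree $d$), then $\lambda_{\max}\!\bigl(A(H)-e_we_w^{T}\bigr)\ge\theta$, with equality iff $H\cong K_2\vee\overline{M_{d-1}}\vee K_1$ ($w$ the $K_1$), in which case $\{K_2,\overline{M_{d-1}},\{w\}\}$ is an equitable partition of $A(H)-e_we_w^{T}$ with quotient exactly $B_{-}$. This is precisely what the proof of Theorem~1.4 of \cite{cio-LAA} supplies; the approach is a constrained spectral optimisation --- writing $W=N_H(w)$, $U=V(H)\setminus N_H[w]$, the degree conditions express $|E(H[W])|$ and $|E(H[U])|$ through the single parameter $|E(W,U)|$, and one shows $\lambda_{\max}\!\bigl(A(H)-e_we_w^{T}\bigr)$ is minimised by making $|E(W,U)|$ as large and $|U|$ as small as possible, forcing $K_2\vee\overline{M_{d-1}}\vee K_1$. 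I expect this to be the main obstacle: it is delicate because the naive test vectors (all-ones, or a vector constant on $\{w\}$, $W$, $U$ with equal values on $W$ and $U$) give Rayleigh quotients strictly below $\theta$, exactly because the extremal $H$ achieves the \emph{maximum} of $|E(W,U)|$ --- so I would cite \cite[Theorem~1.4]{cio-LAA} rather than redo the analysis. Finally, to assemble: the local inequality on both sides yields $\lambda_2(G)\ge\theta=\lambda_2(G_{d,1})$; and if $G[A]\not\cong K_2\vee\overline{M_{d-1}}\vee K_1$ then $\lambda_{\max}\!\bigl(A(G[A])-e_ue_u^{T}\bigr)>\theta$, so choosing $x_A$ a top eigenvector there and $x_B$ a top eigenvector of $A(G[B])-e_ve_v^{T}$ with $(x_B)_v\ne0$ makes the $2\times2$ form minus $\theta I$ strictly positive definite, giving $\lambda_2(G)>\theta$; hence $\lambda_2(G)=\theta$ forces $G[A]\cong G[B]\cong K_2\vee\overline{M_{d-1}}\vee K_1$ glued along $uv$ at their apices, i.e. $G=G_{d,1}$.
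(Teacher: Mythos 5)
The paper never proves this lemma: it is imported wholesale from \cite{cio-LAA}, with the remark that the proof of Theorem~1.4 there ``implies the following conclusion'', so the real comparison is between your argument and Cioab\u{a}'s. Your computation of $\lambda_2(G_{d,1})$ is complete, correct, and genuinely different from the route the paper takes for the analogous Lemma~\ref{G(ka)} (which pushes a quotient matrix through the tridiagonal reduction of Lemma~\ref{tridiagonal matrix}): the centrosymmetric splitting of the $6\times6$ quotient into $B_{+}$ with spectrum $\{d,1,-2\}$ and $B_{-}$ with characteristic polynomial $f_0$, plus the observation that the non-quotient eigenvalues lie in $\{0,-1,-2\}$ and that $f_0(1)=4-4d<0<2d-2=f_0(d)$, does pin down $\lambda_2(G_{d,1})$ as the largest root of $f_0$ --- and this is a part the paper leaves entirely to the citation. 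For the extremal inequality you also take a different decomposition: instead of a $6$-part tridiagonal quotient of $G$ across the cut edge (the template behind Lemmas~\ref{lem-2}--\ref{lem-3} and, in essence, behind Cioab\u{a}'s proof), you run a $2\times2$ Rayleigh-quotient argument and reduce everything to the one-sided statement $\lambda_{\max}\bigl(A(H)-e_we_w^{T}\bigr)\ge\theta$ for an ``almost $d$-regular'' component $H$; that reduction is sound, including the strictness argument in the equality analysis (Perron--Frobenius does give $(x_B)_v\ne0$ on an extremal side). The one caveat is that this one-sided lemma carries all the analytic content of the ``moreover'' clause and you cite it rather than prove it; it is not literally Theorem~1.4 of \cite{cio-LAA} (a connectivity statement) but must be extracted from that proof --- which is exactly the move the paper itself makes for the whole lemma, so you are no less rigorous than the source you are reproving. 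To make it self-contained, the missing piece is the $3\times3$ quotient of $A(H)-e_we_w^{T}$ with respect to $\{V(H)\setminus N[w],\,N(w),\,\{w\}\}$ together with the monotonicity analysis in $|E(W,U)|$ and $|U|$ that Facts~1 and~2 inside Lemma~\ref{lem-2} carry out in the cut-vertex setting.
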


\begin{lem}\label{G(ka)}
Let $G_{d,c}$ be the graph defined as above.
\begin{itemize}
\item[{\rm (i)}]
Let $d\geq4$ and $2\leq c\leq d-2$ be even integers. Then $\lambda_{2}(G_{d,c})$ is the largest root of equation $f_1(x) = 0$, where $f_1(x) = x^{4}-(d-4)x^{3}-(4d-4)x^{2}+(2cd-2c^{2}-4d)x+3c(d-c)$.
\item[{\rm (ii)}]
Let $d\geq5$ be an odd integer and $2\leq c\leq d-2$. Then $\lambda_{2}(G_{d,c})$ is the largest root of equation $f_2(x) = 0$, where $f_2(x) =x^{4}-(d-5)x^{3}-(5d-6)x^{2}+(2cd-2c^{2}-6d)x+4c(d-c)$.
\end{itemize}
\end{lem}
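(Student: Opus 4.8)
The plan is to realize $G_{d,c}$ as a sequential join of five regular graphs, read off an equitable partition, strip the trivial eigenvalue $d$ from the associated quotient matrix via Lemma~\ref{tridiagonal matrix}, and identify the resulting quartic with $f_1$ (even $d$) or $f_2$ (odd $d$); the remaining task is to verify that the largest root of that quartic is actually $\lambda_2(G_{d,c})$, not merely some eigenvalue of $G_{d,c}$.

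For $2\le c\le d-2$ we have $G_{d,c}=B_1\vee B_2\vee B_3\vee B_4\vee B_5$ with $B_3=K_1$ and each $B_i$ one of $K_m$, $\overline{M_m}$, $\overline{\mathcal C_k}$, so each $B_i$ is regular. First I would check that the layer partition $\{V(B_1),\dots,V(B_5)\}$ is equitable: inside $V(B_i)$ every vertex has the constant degree of $B_i$, consecutive layers induce complete bipartite graphs, and non-consecutive layers induce no edges. The $5\times5$ quotient matrix $B$ is then nonnegative and tridiagonal with every row sum equal to $d$; for instance, when $d$ is even,
\[
B=\begin{pmatrix}
d-c & c & 0 & 0 & 0\\
d+1-c & c-2 & 1 & 0 & 0\\
0 & c & 0 & d-c & 0\\
0 & 0 & 1 & d-c-2 & c+1\\
0 & 0 & 0 & d-c & c
\end{pmatrix},
\]
with the two odd-$d$ shapes ($c$ even, $c$ odd) producing similar matrices. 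Since $d$ is a row sum, $d\in\Spec(B)$, and Lemma~\ref{tridiagonal matrix} with $n=4$ turns $B$ into a $4\times4$ matrix $\widetilde A$ whose eigenvalues are the remaining four eigenvalues of $B$. Expanding $\det(xI-\widetilde A)$ gives $f_1(x)$ when $d$ is even and $f_2(x)$ when $d$ is odd; for odd $d$ this must be checked separately for $c$ even and $c$ odd, but those two computations are interchanged by the substitution $c\leftrightarrow d-c$, under which $f_2$ is invariant and $G_{d,c}=G_{d,d-c}$, so they agree automatically. Because the partition is equitable, Lemma~\ref{interlacing} makes every eigenvalue of $B$ — in particular every root of $f_i$ — an eigenvalue of $G_{d,c}$; and since $f_1(d)=c(d-c)(2d+3)>0$ and $f_2(d)=2c(d-c)(d+2)>0$, the value $d$ is not a root, so the largest root $\rho$ of $f_i$ satisfies $\rho<d$.

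To conclude $\rho=\lambda_2(G_{d,c})$, I would use the standard fact that, for an equitable partition whose cells induce regular subgraphs, $\Spec(G_{d,c})$ is the union of $\Spec(B)$ with, for each $i$, the spectrum of $B_i$ restricted to $\mathbf 1_{V(B_i)}^{\perp}$ (each such ``interior'' eigenvector lifts to an eigenvector of $G_{d,c}$). These interior eigenvalues are $-1$ for a clique, $0$ or $-2$ for a cocktail-party graph, and $-(1+\mu)$ for $\overline{\mathcal C_k}$, where $\mu\ge-2$ runs over the non-Perron eigenvalues of the underlying $2$-regular graph; since no cycle eigenvalue is below $-2$, they are all at most $1$. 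Meanwhile $u:=V(B_3)$ is a cut vertex with $G_{d,c}-u=H_1\sqcup H_2$, where $H_1=B_1\vee B_2$ and $H_2=B_4\vee B_5$ are joins of nonempty graphs, hence connected, with average degree $d-\frac{|V(B_2)|}{|V(B_1)|+|V(B_2)|}>d-1$ for $H_1$ and similarly for $H_2$; thus $\lambda_1(H_1),\lambda_1(H_2)>d-1$, and Cauchy interlacing for vertex deletion gives $\lambda_2(G_{d,c})\ge\lambda_2(G_{d,c}-u)\ge\min\{\lambda_1(H_1),\lambda_1(H_2)\}>d-1\ge3$. Hence $\lambda_2(G_{d,c})>1$ exceeds every interior eigenvalue, so it lies in $\Spec(B)$; since $\Spec(B)$ has no element strictly between $\rho=\lambda_2(B)$ and $\lambda_1(B)=d$, this forces $\lambda_2(G_{d,c})=\rho$.

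The step I expect to be the real work is the bookkeeping: writing down the three versions of the $5\times5$ quotient matrix (even $d$; odd $d$ with $c$ even; odd $d$ with $c$ odd) correctly and confirming that $\det(xI-\widetilde A)$ collapses to exactly the claimed quartic in each case. The spectral-decomposition-plus-interlacing argument that pins $\rho$ to $\lambda_2(G_{d,c})$ is then routine, its only mildly delicate point being the uniform bound $\le1$ on the interior eigenvalues of the $\overline{\mathcal C_k}$ blocks.
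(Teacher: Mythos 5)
Your proposal is correct and follows essentially the same route as the paper: the equitable partition into the five layers of the sequential join, the quotient matrix, Lemma \ref{tridiagonal matrix} to strip off the eigenvalue $d$, the symmetry $G_{d,c}=G_{d,d-c}$ to merge the two odd-$d$ cases, and a final check that the remaining ``interior'' eigenvalues of $G_{d,c}$ cannot be $\lambda_2$. The only divergence is in that last check, where the paper bounds the interior eigenvalues by the maximum degree $d-1$ of the disjoint union of the blocks and simply asserts $\lambda_1(\widetilde{B_1})>d-\tfrac{1}{2}$, whereas you bound the interior eigenvalues by $1$ explicitly and obtain $\lambda_2(G_{d,c})>d-1$ via cut-vertex interlacing --- a minor variation that is, if anything, more fully justified than the paper's.
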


\begin{proof}
For (i),  $G_{d,c}=K_{d+1-c} \vee \overline{M_c} \vee K_1 \vee \overline{M_{d-c}} \vee K_{c+1}$ has an equitable partition $V(G_{d,c}) = V(K_{d+1-c}) \cup V(\overline{M_c}) \cup V(K_1) \cup V(\overline{M_{d-c}}) \cup V(K_{c+1})$. Then the corresponding quotient matrix is
$$B_{1}=
\left(\begin{array}{ccccc}
d-c&c&0&0&0\\
d+1-c&c-2&1&0&0\\
0&c&0&d-c&0\\
0&0&1&d-c-2&c+1\\
0&0&0&d-c&c
\end{array}\right),
$$
which, along with  Lemma \ref{interlacing}, shows that the eigenvalues of $B_1$ are  the eigenvalues of $G_{d,c}$.
By Lemma \ref{tridiagonal matrix} we have $\lambda_{2}(B_1)=\lambda_1(\widetilde{B_1})$, where
$$\widetilde{B_1}=
\left(\begin{array}{cccc}
-1&1&0&0\\
d+1-c&d-c-1&d-c&0\\
0&c&c-1&c+1\\
0&0&1&-1
\end{array}\right).
$$
Thus, a direct calculation shows that $\lambda_{1}(\widetilde{B_{1}})$ is the largest root of equation  $f_1(x) = 0$.
Note $\lambda_{1}(\widetilde{B_{1}})>d-\frac{1}{2}.$ Let $W$ be the characteristic space of the five parts (of the equitable partition) of $G_{d,c}$. Clearly, the dimension of $W$  is five. Note that each eigenvector ${\bf x}$ associated to an eigenvalue of $B_{1}$ can be extended to be an eigenvector ${\bf x'}$ of $G_{d,c}$, and that the components of ${\bf x'}$
corresponding to each part of the equitable partition are the same. Hence, the five independent eigenvectors extended by those of $B_{1}$ span $W$.
Except eigenvalues of $B_{1}$, remark again that the eigenvectors of other eigenvalues of $G_{d,c}$  are orthogonal to those in $W$.
So, other eigenvalues of $G_{d,c}$ are also the eigenvalues of graph $K_{d+1-c}\cup\overline{M_c}\cup K_{1}\cup\overline{M_{d-c}}\bigcup K_{c+1}$, whose maximum degree is not larger than $d-1$. Hence, other eigenvalues of $G_{d,c}$ except the ones of $B_{1}$ are less than $d-1$. Thereby, $\lambda_{2}(G_{d,c})=\lambda_{2}(B_{1})=\lambda_{1}(\widetilde{B_{1}})$, which is the largest root of equation $f_1(x)=0$.\smallskip

For (ii), consider firstly $ c \in [2, d-2]$ to be an odd integer. Clearly, $G_{d,a}=\overline{M_{d+2-c}} \vee \overline{\mathcal{C}_{c}} \vee K_1 \vee \overline{M_{d-c}} \vee K_{c+1}$ has an equitable partition $V(\overline{M_{d+2-c}}) \cup V(\overline{\mathcal{C}_{c}}) \vee V(K_1) \vee V(\overline{M_{d-c}}) \vee V(K_{c+1})$. Then, the related quotient matrix  is
$$
B_{2}=\left(\begin{array}{ccccc}
d-c&c&0&0&0\\
d+2-c&c-3&1&0&0\\
0&c&0&d-c&0\\
0&0&1&d-c-2&c+1\\
0&0&0&d-c&c
\end{array}\right).
$$
From Lemma \ref{interlacing} the eigenvalues of $B_{2}$ are also the eigenvalues of $G_{d,c}$. By Lemma \ref{tridiagonal matrix} again we get $\lambda_{2}(B_2)=\lambda_{1}(\widetilde{B_2})$, where
\begin{equation}\label{B2}
\widetilde{B_2}=
\left(\begin{array}{cccc}
-2&1&0&0\\
d+2-c&d-c-1&d-c&0\\
0&c&c-1&c+1\\
0&0&1&-1
\end{array}\right).
\end{equation}
By a routine computing, $\lambda_{1}(\widetilde{B_{2}})$ is the largest root of equation $f_2(x) =0$. Note that $\lambda_{1}(\widetilde{B_{2}})>d-\frac{1}{2}$. Similarly to (i), we can verify that $\lambda_{2}(G_{d,c})=\lambda_{2}(B_{2})=\lambda_{1}(\widetilde{B_{2}})$ is the largest root of equation $f_2(x)=0$. If $2\leq c \leq d-2$ is even, then $2\leq d-c\leq d-2$ is odd. Due to $G_{d,c}=G_{d,d-c}$, applying the above discussion to $G_{d,d-c}$ we obtain the conclusion.
\end{proof}

\begin{lem}\label{compare}
For $d\geq3$ and $1\leq c\leq k-1$, let $G_{k,a}$ be the graph defined above.
\begin{itemize}
\item[{\rm (i)}]
If $d\geq4$ is even, then $\lambda_{2}(G_{d,2})>\lambda_{2}(G_{d,4})>\cdots>\lambda_{2}(G_{d,2\lfloor\frac{d}{4}\rfloor})$.
\item[{\rm (ii)}]
If $d\geq3$ is odd, then $\lambda_{2}(G_{d,1})>\lambda_{2}(G_{d,2})>\cdots>\lambda_{2}(G_{d,\frac{d-1}{2}})$.
\end{itemize}
\end{lem}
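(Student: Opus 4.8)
The plan is to exploit the observation, implicit in Lemma~\ref{G(ka)}, that $f_1$ and $f_2$ depend on the parameter $c$ only through $p := c(d-c)$. Since $2cd-2c^2 = 2p$, for even $d$ we may write
\[
f_1(x) = \bigl(x^4 - (d-4)x^3 - (4d-4)x^2 - 4dx\bigr) + p\,(2x+3),
\]
and for odd $d$
\[
f_2(x) = \bigl(x^4 - (d-5)x^3 - (5d-6)x^2 - 6dx\bigr) + p\,(2x+4).
\]
In both cases the defining polynomial has the form $F_p(x) := F_0(x) + p\,\ell(x)$ with $F_0$ a monic quartic independent of $c$ and $\ell$ linear with positive leading coefficient, and by Lemma~\ref{G(ka)} the eigenvalue $\lambda_2(G_{d,c})$ is the largest root $\theta(p)$ of $F_p$.

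The core step will be to show $p_1 > p_2 \Rightarrow \theta(p_1) < \theta(p_2)$. Recall from the proof of Lemma~\ref{G(ka)} that $\theta(p_2) = \lambda_2(G_{d,c}) > d - \tfrac12 > 0$, so $\ell(x) > 0$ for all $x \ge \theta(p_2)$, while $F_{p_2}(x) \ge 0$ on $[\theta(p_2),\infty)$ because $\theta(p_2)$ is the largest root of the monic quartic $F_{p_2}$. Hence
\[
F_{p_1}(x) = F_{p_2}(x) + (p_1-p_2)\,\ell(x) > 0 \qquad (x \ge \theta(p_2)),
\]
so $F_{p_1}$ has no root in $[\theta(p_2),\infty)$, forcing $\theta(p_1) < \theta(p_2)$. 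I would then note that $c \mapsto c(d-c)$ is strictly concave with maximum at $c=d/2$, hence strictly increasing over each index set in the statement --- $c \in \{2,4,\dots,2\lfloor d/4\rfloor\}$ for even $d$ (since $2\lfloor d/4\rfloor \le d/2$) and $c \in \{2,3,\dots,\tfrac{d-1}{2}\}$ for odd $d$ (since $\tfrac{d-1}{2} \le d/2$). Combined with the monotonicity of $\theta$, this gives the whole chain of (i) at once, and the chain $\lambda_2(G_{d,2}) > \lambda_2(G_{d,3}) > \cdots > \lambda_2(G_{d,\frac{d-1}{2}})$ of (ii).

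The one comparison left over in (ii), and the part I expect to be the main obstacle, is the first inequality $\lambda_2(G_{d,1}) > \lambda_2(G_{d,2})$, since $G_{d,1}$ has the exceptional bridge structure and, by Lemma~\ref{cut edge}, its second eigenvalue is the largest root of the \emph{cubic} $f_0(x) = x^3-(d-3)x^2-(3d-2)x-2$, which does not fit the form $F_p$. I would handle it through the polynomial identity
\[
(x+2)\,f_0(x) = f_2|_{c=2}(x) + 2(x+2)(x-2d+3),
\]
checked by division; evaluating at $\mu := \lambda_2(G_{d,2})$, a root of $f_2|_{c=2}$, and dividing by $\mu+2>0$ gives $f_0(\mu) = 2(\mu-2d+3)$. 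Since $G_{d,2}$ is connected and $d$-regular, $\mu < d$, and $d < 2d-3$ for odd $d \ge 5$; hence $f_0(\mu) < 0$, and as $f_0$ is a cubic with positive leading coefficient it must have a root exceeding $\mu$, so $\lambda_2(G_{d,1}) > \mu = \lambda_2(G_{d,2})$ (the case $d=3$ being trivial). Apart from locating this identity, the rest is bookkeeping: spotting the $p$-only dependence, and checking that each listed range of $c$ lies in $(0,d/2]$.
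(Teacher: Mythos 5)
Your proposal is correct and follows essentially the same route as the paper: both isolate the $c$-dependence of $f_1,f_2$ in the term $c(d-c)\,\ell(x)$ with $\ell>0$ near the largest root and conclude monotonicity in $c(d-c)$, and both settle the remaining comparison $\lambda_2(G_{d,1})>\lambda_2(G_{d,2})$ via the same division identity relating $(x+2)f_0(x)$ to $f_2|_{c=2}(x)$ (the paper evaluates it at $x\ge\lambda_2(G_{d,1})$ to show $f_2|_{c=2}>0$ there, while you evaluate at $\mu=\lambda_2(G_{d,2})$ to show $f_0(\mu)<0$ --- the same computation read in the opposite direction). Your explicit check that $c\mapsto c(d-c)$ is increasing on the stated ranges is a point the paper leaves implicit.
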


\begin{proof}
Let $d\geq4$ be even and $2\leq c\leq2\lfloor\frac{d}{4}\rfloor$. By Lemma \ref{G(ka)}(i), $\lambda_{2}(G_{k,a})$ is the largest root of equation $f_1(x)=0$. Since
\begin{equation*}
\begin{split}
f_1(x)&=x^{4}-(d-4)x^{3}-(4d-4)x^{2}+(2cd-2d^{2}-4k)x+3c(d-c)\\
&=x^{4}-(d-4)x^{3}-(4d-4)x^{2}-4dx+c(d-c)(2x+3),
\end{split}
\end{equation*}
then for $x>0$ it is easy to see that $c(d-c)(2x+3)$ is strictly increasing w.r.t. $c$. Thereby, the largest root of equation $f_1(x)=0$ is strictly decreasing w.r.t. $c$, which implies $\lambda_{2}(G_{d,2})>\lambda_{2}(G_{d,4})>\cdots>\lambda_{2}(G_{d,2\lfloor\frac{d}{4}\rfloor})$.

We next show (ii). For $d=3$, the result is trivial. Let $d\geq5$ be odd and $2\leq c \leq\frac{k-1}{2}$. By Lemma \ref{G(ka)}(ii), $\lambda_{2}(G_{d,c})$ is the largest root of equation $f_2(x)=0$. Since
\begin{equation*}
\begin{split}
f_2(x)&=x^4-(d-5)x^3-(5d-6)x^2+(2cd-2c^2-6d)x+4c(d-c)\\
      &=x^4-(d-5)x^3-(5d-6)x^2-6dx+2c(d-c)(x+2),
\end{split}
\end{equation*}
then for $x>0$ we get that $2c(d-c)(x+2)$ is strictly increasing w.r.t. $c$. Therefore, the largest root of equation $f_2(x)=0$ is  strictly decreasing w.r.t. $c$, and hence $\lambda_2(G_{d,2})>\lambda_2(G_{d,3})>\cdots>\lambda_2(G_{d,\frac{d-1}{2}})$.

For $c=1$, from Lemma \ref{cut edge} it follows that $\lambda_{2}(G_{d,1})$ is the largest root of $f_0(x)=0$.  By Lemma \ref{G(ka)}(ii) again, $\lambda_{2}(G_{d,2})$ is the largest root of equation
\begin{equation*}
\begin{split}
f_2(x)|_{c=2}&=x^4-(d-5)x^3-(5d-6)x^2-(2d+8)x+8d-16\\
      &=(x+2)(x^{3}-(d-3)x^{2}-(3d-2)x-2)+2x(2d-5-x)+8d-12\\
      &=0.
\end{split}
\end{equation*}
For $d\geq5$ and  $\lambda_{2}(G_{k,1}) \leq x < d$, we get $f_0(x) \geq0$, and then $x^4-(d-5)x^3-(5d-6)x^2-(2d+8)x+8d-16>0$. Hence, $\lambda_{2}(G_{d,1})>\lambda_{2}(G_{d,2})$, and so $\lambda_{2}(G_{k,1})>\lambda_{2}(G_{k,2})>\cdots>\lambda_{2}(G_{k,\frac{k-1}{2}})$.

This completes the proof.
\end{proof}

\section{Proof of Theorem \ref{main2}}

Recall, for two integers $d\geq3$ and $1\leq c \leq d-1$, that the family $\mathscr{G}_{d,c}$ is defined in Section 1.

\begin{lem}\label{lem-1}
Let $G \in \mathscr{G}_{d,c}$.  If $d\geq3$ is odd  and $c=1$ or $c=d-1$, then $\lambda_{2}(G)\geq\lambda_{2}(G_{d,c})$ with equality holds if and only if $G=G_{d,c}$.
\end{lem}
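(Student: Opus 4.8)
The plan is to derive the statement directly from Lemma~\ref{cut edge}, which already handles $d$-regular graphs possessing a cut \emph{edge}; the only work is to show that a graph in $\mathscr{G}_{d,c}$ with $c\in\{1,d-1\}$ necessarily has one. So first I would fix, as in the definition of $\mathscr{G}_{d,c}$, a cut vertex $u$ of $G$ and a component $G_1$ of $G-u$ with $|E(u,V(G_1))|=c$. The basic observation I would rely on throughout is that, because $G_1$ is an \emph{entire} component of $G-u$, a vertex of $V(G_1)$ cannot be adjacent to any vertex of $V(G)\setminus(V(G_1)\cup\{u\})$; consequently the only edges of $G$ between $V(G_1)$ and $V(G)\setminus V(G_1)$ are the $c$ edges in $E(u,V(G_1))$.

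Next I would split into the two cases. If $c=1$, then that single edge of $E(u,V(G_1))$ is the unique edge between $V(G_1)$ and the rest of $G$, hence a bridge. If $c=d-1$, I would instead consider the set $S:=V(G_1)\cup\{u\}$; by the observation above every edge of $G$ leaving $S$ is incident with $u$, and the number of such edges is $\deg_G(u)-|E(u,V(G_1))|=d-(d-1)=1$. Since $u$ is a cut vertex, $G-u$ has a second component, so $V(G)\setminus S\neq\emptyset$, and therefore this unique edge leaving $S$ is a bridge. In both cases $G$ has a cut edge.

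Finally, since $d\ge 3$ is odd, Lemma~\ref{cut edge} applies and yields $\lambda_2(G)\ge\lambda_2(G_{d,1})$ with equality if and only if $G=G_{d,1}$. As $G_{d,1}=G_{d,d-1}=G_{d,c}$ for $c\in\{1,d-1\}$ (noted right after the definition of $G_{d,c}$), this is exactly the claimed inequality $\lambda_2(G)\ge\lambda_2(G_{d,c})$ together with the stated equality characterization. I do not expect a genuine obstacle here: the one place requiring a little care is the $c=d-1$ case, where the bridge must be located on the small side $V(G_1)\cup\{u\}$ rather than incident to $G_1$, using crucially that $G_1$ exhausts a component of $G-u$; the substantive difficulties of the paper lie in the ranges $2\le c\le d-2$, which this lemma does not touch.
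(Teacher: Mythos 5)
Your proposal is correct and follows exactly the paper's route: reduce to Lemma~\ref{cut edge} by observing that $G$ has a cut edge. The paper asserts this in one line without justification, whereas you supply the (correct) bridge argument for both $c=1$ and $c=d-1$; no discrepancy.
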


\begin{proof}
In this case, $G$ has cut edges. Then the result follows from Lemma \ref{cut edge}.
\end{proof}

\begin{lem}\label{lem-2}
Let $G \in \mathscr{G}_{d,c}$.  If $d\geq4$ and $c \in [2, d-2]$ are even , then $\lambda_{2}(G)\geq\lambda_{2}(G_{d,c})$, where he equality holds if and only if $G=G_{d,c}$.
\end{lem}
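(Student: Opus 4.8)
The plan is to use the cut vertex of $G$ to build a small non‑equitable partition, bound $\lambda_2(G)$ from below by the second eigenvalue of its quotient matrix (Lemma~\ref{interlacing}), shrink that to the largest eigenvalue of a $4\times 4$ matrix (Lemma~\ref{tridiagonal matrix}), and compare with the matrix $\widetilde{B_1}$ from the proof of Lemma~\ref{G(ka)}(i). Let $u$ be a cut vertex of $G\in\mathscr{G}_{d,c}$, let $G_1$ be the component of $G-u$ with $|E(u,V(G_1))|=c$, and set $G_2=G-u-V(G_1)$, so $|E(u,V(G_2))|=d-c$ (with $G_2$ possibly disconnected); put $n_i=|V(G_i)|$. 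I would first record two structural facts. \emph{(a)} $n_1\ge d+1$ and $n_2\ge d+1$: since $c\le d-2<d$ and $d-c\le d-2<d$, not every vertex of $G_1$, nor of any component of $G_2$, can be adjacent to $u$, so $G_1$ (resp.\ that component) contains a vertex all of whose $d$ edges stay inside it, forcing at least $d+1$ vertices. \emph{(b)} If $n_1=d+1$, each of the $d+1-c$ non‑neighbours of $u$ in $G_1$ is adjacent to every other vertex of $G_1$, hence they span a clique joined to all of $G_1$, and the $c$ neighbours of $u$ then induce a $(c-2)$‑regular graph on $c$ vertices; since $c$ is even this graph is $\overline{M_c}$, so $G_1=K_{d+1-c}\vee\overline{M_c}$ with the neighbours of $u$ forced to be the $\overline{M_c}$‑part. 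The same applies to $G_2$ (using that $d-c$ is even). Consequently $n_1=n_2=d+1$ forces $G=G_{d,c}$.

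Next I would take the partition $\pi=\{\,V(G_1)\setminus N(u),\ N(u)\cap V(G_1),\ \{u\},\ N(u)\cap V(G_2),\ V(G_2)\setminus N(u)\,\}$, whose five parts are nonempty by \emph{(a)}. Edges of $G$ join only consecutive parts of $\pi$, so its quotient matrix $B^G$ is a nonnegative tridiagonal matrix with every row sum equal to $d$; Lemma~\ref{tridiagonal matrix} then produces a $4\times 4$ matrix $\widetilde{B^G}$ with $\lambda_1(\widetilde{B^G})=\lambda_2(B^G)$, and Lemma~\ref{interlacing} gives $\lambda_2(G)\ge\lambda_2(B^G)=\lambda_1(\widetilde{B^G})$. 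A direct computation shows $\widetilde{B^G}$ coincides with $\widetilde{B_1}$ of Lemma~\ref{G(ka)}(i) except in four entries, governed by the two per‑vertex edge densities $s_L=|E(N(u)\cap V(G_1),V(G_1)\setminus N(u))|/c$ and its $G_2$‑mirror $s_R$, and by $n_1,n_2$ through the relations $(n_1-c)\cdot(\text{density into }N(u)\cap V(G_1))=c\,s_L$ and its mirror. When $n_1=n_2=d+1$ one gets exactly $\widetilde{B^G}=\widetilde{B_1}$, so $\lambda_1(\widetilde{B^G})=\lambda_2(G_{d,c})$ by Lemma~\ref{G(ka)}(i); conversely, reading off the four entries, $\widetilde{B^G}=\widetilde{B_1}$ forces $n_1=n_2=d+1$, hence $G=G_{d,c}$ by \emph{(b)}.

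It remains to prove the strict inequality $\lambda_1(\widetilde{B^G})>\mu:=\lambda_2(G_{d,c})$ whenever $G\ne G_{d,c}$, i.e.\ whenever $n_1>d+1$ or $n_2>d+1$. As $\widetilde{B^G}$ has real spectrum and $\det(xI-\widetilde{B^G})\to+\infty$ as $x\to\infty$, this follows once $\det(\mu I-\widetilde{B^G})<0$. Splitting the tridiagonal matrix $\mu I-\widetilde{B^G}$ between its left and right $2\times 2$ blocks gives
\[
\det(\mu I-\widetilde{B^G})=\ell(\mu)\,r(\mu)-c(d-c)(\mu-\alpha)(\mu-\alpha_R),
\]
where $\ell,r$ are the two $2\times2$ ``responses'' and $\alpha,\alpha_R$ the corner diagonal entries; in this form the right side is affine in the left parameters for fixed right parameters, and vice versa. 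Since $\mu$ is a root of $\det(xI-\widetilde{B_1})=0$ by Lemma~\ref{G(ka)}(i), the same identity for $\widetilde{B_1}$ reads $\ell^{(1)}(\mu)\,r^{(1)}(\mu)=c(d-c)(\mu+1)^2$; feeding this back in reduces the claim to showing that an explicit affine expression in $s_L,s_R,n_1,n_2$ is negative over the admissible parameter region, which by bi‑affinity reduces to finitely many boundary configurations, verified using only $n_1,n_2\ge d+1$, the degree bounds $d-c\le s_L\le d-1$ and $c\le s_R\le d-1$, and the linking relations above. The hard part is precisely this last verification: because $G_{d,c}$ lies on the boundary of the parameter region in several coordinates simultaneously, the affine inequality is tight rather than amenable to a one‑line monotonicity argument, and one must carefully enumerate the relevant corners and determine the sign of $\det(\mu I-\widetilde{B^G})$ at each.
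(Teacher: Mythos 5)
Your setup coincides with the paper's: the same five--part partition $V(G_1')\cup V_1\cup\{u\}\cup V_2\cup V(G_2')$, the same lower bounds $|V(G_i)|\ge d+1$, the same use of Lemma \ref{interlacing} followed by Lemma \ref{tridiagonal matrix} to reduce to a $4\times4$ matrix differing from $\widetilde{B_1}$ only in the four corner-block entries, and the same identification of the equality case via the rigidity of $n_1=n_2=d+1$. Up to that point the argument is sound.

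The gap is in the last step, and it is the whole content of the lemma. You need $\det(\mu I-\widetilde{B^G})<0$ for every admissible non-extremal parameter choice, but you only assert that this ``reduces to finitely many boundary configurations'' and explicitly decline to check them. Three things make this more than a routine omission. First, the claimed bi-affinity is not accurate: the $(1,1)$ entry involves $r/p=cs_L/(n_1-c)$, so the dependence on $n_1$ is hyperbolic, not affine, and the admissible region is not a box --- $s_L$ is coupled to $n_1$ through $r\le cp$, and $n_1$ is unbounded above --- so ``extrema at corners'' does not follow without a separate monotonicity argument in each variable. Second, the function you must bound vanishes exactly at the extremal corner (since $\mu$ is a root of $\det(xI-\widetilde{B_1})$), so you need strict negativity on the rest of a region whose maximizer sits on the boundary in all four coordinates simultaneously; establishing that is precisely the content of the paper's Fact~1 (the largest root of $\widetilde{B_3}$ is strictly decreasing in $r$ and $t$) and Fact~2 (after substituting $r=cp$, $t=(d-c)q$, the largest root of $D$ is strictly increasing in $p$ and $q$), each proved by a sign analysis of the characteristic polynomial split as $(x-\alpha)h_1(x)+\rho\,h_2(x)$. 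Your plan, if completed, would have to reproduce exactly these monotonicity statements, so as written the proposal identifies the difficulty correctly but does not resolve it; the proof is incomplete at its decisive point.
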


\begin{proof}
Since $G \in \mathscr{G}_{d,c}$, then there exists a cut vertex $u$ such that $|E(u,V(G_1))|=c$, where $G_1$ is a component of $G-u$. Let $G_2$ be the union of
other components of $G-u$. Then $|E(u,V(G_2))|=d-c$. If $|V(G_1)|\leq d$, then $c=|E(u,V(G_1))|\geq |V(G_1)|(d+1-|V(G_1)|)\geq d$, a contradiction. Hence,
$|V(G_1)|\geq d+1$, and so is $|V(G_2)|$ similarly. Let $V_1$ and $V_2$ be the sets of the neighbors of $u$ in $G_1$ and in $G_2$ respectively. Then $|V_1|=c$
and $|V_2|=d-c$. Set $G_1^{'}=G_1-V_1$ and $G_2^{'}=G_2-V_2$. Thus, $|V(G_1^{'})|= p \geq d+1-c$ and $|V(G_2^{'})|=q \geq d+1-(d-c)=c+1$. After putting
$|E(V(G_1^{'}),V_1)|=r$ and $|E(V(G_2^{'}),V_2)|=t$, we obtain a partition of $G$ with $V(G) = V(G_1^{'}) \cup V_1 \cup \{u\} \cup V_2 \cup V(G_2^{'})$ whose quotient matrix is
\begin{equation}\label{B3}
B_{3}=\left(\begin{array}{ccccc}
d-\frac{r}{p}&\frac{r}{p}&0&0&0\\
\frac{r}{c}&d-1-\frac{r}{c}&1&0&0\\
0&c&0&d-c&0\\
0&0&1&d-1-\frac{t}{d-c}&\frac{t}{d-c}\\
0&0&0&\frac{t}{q}&d-\frac{t}{q}
\end{array}\right).
\end{equation}
Using Lemmas \ref{tridiagonal matrix} and \ref{interlacing} we get $\lambda_{2}(G)\geq \lambda_{2}(B_{3})=\lambda_{1}(\widetilde{B_{3}})$, where $\widetilde{B_{3}}$ is
\begin{equation}\label{B33}
\left(\begin{array}{cccc}
d-\frac{r}{p}-\frac{r}{c}&1&0&0\\
\frac{r}{c}&d-c-1&d-c&0\\
0&c&c-1&\frac{t}{d-c}\\
0&0&1&d-\frac{t}{d-c}-\frac{t}{q}
\end{array}\right).
\end{equation}

\noindent{\bf Fact 1.}  The largest root $\lambda_{1}(\widetilde{B_{3}})$ of matrix $\widetilde{B_{3}}$ is strictly decreasing w.r.t. $r$ and $t$.\smallskip

\noindent{\it Proof of Fact 1.} We only prove the conclusion for $r$, as it is similar for $t$. By a computing, the characteristic polynomial of $\widetilde{B_{3}}$ is equal to $h(x)=|x I-\widetilde{B_{3}}|=(x-d)h_1(x)+r h_2(x)$, where $h_2(x)=(\frac{1}{p}+\frac{1}{c})h_1(x)+\frac{t}{c(d-c)}-\frac{1}{c}(x+c(d-1-c)+1)(x-d+\frac{t}{d-c}+\frac{t}{q})$, and $h_1(x)$ is the next characteristic polynomial of a principle sub-matrix of $\widetilde{B_{3}}$
$$\left(\begin{array}{ccc}
d-c-1&d-c&0\\
c&c-1&\frac{t}{d-c}\\
0&1&d-\frac{t}{d-c}-\frac{t}{q}
\end{array}\right).
$$
Clearly, for $x \geq \lambda_1(\widetilde{B_{3}})$ we get $h_1(x)>0$ and  $h(x)\geq0$. Thus, for $\lambda_{1}(\widetilde{B_{3}}) \leq x <d$ we arrive at  $h_2(x)>0$ and hence $(x-d)h_1(x)+r_1h_2(x) > h(x) \geq 0$ for any $r_1>r$. Thereby, the largest root of equation $(x-d)h_1(x)+r_1h_2(x)=0$ is less than $\lambda_1(\widetilde{B_{3}})$. So, the largest root of $\widetilde{B_{3}}$ is strictly decreasing w.r.t. $r$. \hfill{$\Box$}

Employing Fact $1$, we can make $r$ and $t$ as large as possible. By the well-know Perron-Frobenius Theorem, the largest eigenvalue of an irreducible matrix will strictly decrease if its positive elements decrease. Then for given $r$ and $t$ we set $p$ and $q$ as small as possible in $\widetilde{B_{3}}$. Noting $r\leq c(d-1)$ and $t\leq(d-c)(d-1)$, we suppose $d+1-c \leq p\leq d-1$, $c+1\leq q\leq d-1$, $r=cp$ and $t=(d-c)q$ (For example, if $p\geq d$, we can firstly make $r$ as large as possible, i.e., $r=c(d-1)$. And then we set $p$ as small as possible, i.e., $p= d-1$). Thereby, $G[V_1],G[V(G_1^{'})],G[V_2]$ and $G[V(G_2^{'})]$ are four regular graphs of degrees $d-1-p,d-c,d-1-q$ and $c$, respectively (note that for any two positive integers $d<t$,  then there exists a $d$-regular graph on $t$ vertices if $d$ is even). Apparently, $\widetilde{B_{3}}$ is equal to the following matrix $D$
\begin{equation}\label{D}
D=\left(\begin{array}{cccc}
d-c-p&1&0&0\\
p&d-c-1&d-c&0\\
0&c&c-1&q\\
0&0&1&c-q
\end{array}\right).
\end{equation}

\noindent{\bf \textbf{Fact} 2}.  The largest root $\lambda_{1}(D)$ of matrix $D$ is a strictly increasing function w.r.t. $p$ and $q$.\smallskip

\noindent{\it Proof of Fact 2.} We only prove the conclusion for $p$, as it is similar for $q$. A straightforward calculation yields the characteristic polynomial of $D$ is equal to $g(x)=|x I-D|=(x-d+c)g_1(x)+pg_2(x)$, where $g_2(x)=g_1(x)-(x-c+1)(x-c+q)$, and $g_1(x)$ is the characteristic polynomial of next principle sub-matrix of $D$,
$$
\left(\begin{array}{ccc}
d-c-1&d-c&0\\
c&c-1&q\\
0&1&c-q
\end{array}\right).
$$
For $x \geq \lambda_{1}(D)$, clearly we get $g_1(x)>0$.   For any $d+1-c\leq p^{'}<p$, we now prove $(x-d+c)g_1(x)+p^{'} g_2(x)>0$ whenever $x \geq \lambda_{1}(D)$. Evidently, for any $ \lambda_{1}(D) \leq x < d$, if $g_2(x)\geq0$, then $(x-d+c)g_1(x)+p^{'}g_2(x)>0$. If $g_2(x)<0$ for some $ \lambda_{1}(D)\leq x<d$, then $(x-d+c)g_1(x)+p^{'}g_2(x)>(x-d+c)g_1(x)+pg_2(x)\geq0$. Hence, $(x-d+c)g_{1}(x)+p^{'}g_2(x)>0$ for $x \geq \lambda_{1}(D)$, which implies that the largest root of equation $(x-d+c)g_1(x)+p^{'}g_2(x)=0$ is less than $\lambda_{1}(D)$. So, the largest root $\lambda_{1}(D)$ of matrix $D$ is strictly increasing w.r.t. $p$.\hfill$\Box$

In view of Fact $2$, we can set $p=d+1-c$ and $q=c+1$ which together with $r=cp$ and $t=(d-c)q$ leads to $\widetilde{B_{3}}=\widetilde{B_{1}}$ (defined in Lemma \ref{G(ka)}). Consequently, $\lambda_{2}(G) \geq \lambda_{2}(G_{d,c})$ with equality holds if and only if $G=G_{d,c}$.
\end{proof}

For the completeness of next lemma,  we give an imitate proof with the similar as Lemma \ref{lem-2}.

\begin{lem}\label{lem-3}
Let $G \in \mathscr{G}_{d,c}$ and $d\geq5$ be an odd integer.
\begin{itemize}
\item[{\rm (i)}]
If $c \in [2,d-2]$ is odd, then $\lambda_{2}(G)\geq\lambda_{2}(G_{d,c})$ with equality if and only if $$G=\overline{M_{d+2-c}} \vee \overline{\mathcal{C}_c} \vee K_1 \vee \overline{M_{d-c}} \vee K_{c+1},$$ where $\mathcal{C}_c$ is the union of disjoint cycles on $c$ vertices.
\item[{\rm (ii)}]
If $c \in [2, d-2]$ is even, then $\lambda_{2}(G)\geq\lambda_{2}(G_{d,c})$ with equality  if and only if $$G=K_{d+1-c} \vee \overline{M_{c}} \vee K_1 \vee \overline{\mathcal{C}_{d-c}} \vee \overline{M_{c+2}},$$ where $\mathcal{C}_{d-c}$ is the union of disjoint cycles on $d-c$ vertices.
\end{itemize}
\end{lem}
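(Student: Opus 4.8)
\emph{Proposal.} The plan is to rerun the proof of Lemma~\ref{lem-2} essentially verbatim; the one genuinely new ingredient is a handshake-type parity restriction that shifts the optimal size of one of the two ``halves'' of $G-u$ by a single vertex. So, given $G\in\mathscr{G}_{d,c}$, I would fix a cut vertex $u$ and a component $G_1$ of $G-u$ with $|E(u,V(G_1))|=c$, let $G_2$ collect the remaining components (so $|E(u,V(G_2))|=d-c$), and set $V_i=N(u)\cap V(G_i)$, $G_i'=G_i-V_i$, $p=|V(G_1')|$, $q=|V(G_2')|$, $r=|E(V(G_1'),V_1)|$, $t=|E(V(G_2'),V_2)|$. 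As in Lemma~\ref{lem-2}, the degree count gives $p\ge d+1-c$ and $q\ge c+1$, the five-part partition $V(G_1')\cup V_1\cup\{u\}\cup V_2\cup V(G_2')$ has the quotient matrix $B_3$ of \eqref{B3}, and Lemmas~\ref{tridiagonal matrix}--\ref{interlacing} yield $\lambda_2(G)\ge\lambda_2(B_3)=\lambda_1(\widetilde{B_3})$ with $\widetilde{B_3}$ as in \eqref{B33}. Facts~1 and~2 carry over word for word, since their proofs never use the parity of $c$, so after maximising $r,t$ and then minimising the part sizes we reduce to the matrix $D$ of \eqref{D} with $r=cp$, $t=(d-c)q$, $d+1-c\le p\le d-1$, $c+1\le q\le d-1$, and $\lambda_1(D)$ \emph{strictly} increasing in $p$ and in $q$.

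For part (i), where $c$ is odd (hence $d-c$ even), the extra point is that $p=d+1-c$ cannot occur: the handshake identity $2\,e(G_1)=d\,|V(G_1)|-c$ forces $|V(G_1)|$ to be odd (both $d,c$ odd), and since $|V(G_1)|\ge d+1$ is even we get $|V(G_1)|\ge d+2$, i.e.\ $p\ge d+2-c$; meanwhile $2\,e(G_2)=d\,|V(G_2)|-(d-c)$ only forces $|V(G_2)|$ even, compatible with $|V(G_2)|\ge d+1$, so $q\ge c+1$ is untouched. Substituting $p=d+2-c$, $q=c+1$, $r=cp$, $t=(d-c)q$ into \eqref{D} turns $D$ into exactly the matrix $\widetilde{B_2}$ of \eqref{B2}, whose largest root is $\lambda_2(G_{d,c})$ by Lemma~\ref{G(ka)}(ii); hence $\lambda_2(G)\ge\lambda_1(D)\ge\lambda_2(G_{d,c})$. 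If equality holds, all inequalities are squeezed, so the strict monotonicity in Facts~1--2 together with $p\ge d+2-c$ forces $r=cp$, $t=(d-c)q$, $p=d+2-c$, $q=c+1$; then $V(G_1')$ and $V_1$ are completely joined, so $G[V(G_1')]$ is $(d-c)$-regular on $(d-c)+2$ vertices (i.e.\ $\overline{M_{d+2-c}}$) and $G[V_1]$ is $(c-3)$-regular on $c$ vertices (i.e.\ $\overline{\mathcal{C}_c}$ for a union of cycles $\mathcal{C}_c$), while symmetrically $G[V(G_2')]=K_{c+1}$ and $G[V_2]=\overline{M_{d-c}}$, so $G=\overline{M_{d+2-c}}\vee\overline{\mathcal{C}_c}\vee K_1\vee\overline{M_{d-c}}\vee K_{c+1}$; conversely every such graph carries the above (now equitable) partition and attains the bound.

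Part (ii), with $c$ even and $d-c$ odd, is the mirror image: now $2\,e(G_2)=d\,|V(G_2)|-(d-c)$ forces $|V(G_2)|$ odd, hence $|V(G_2)|\ge d+2$ and $q\ge c+2$, while $p\ge d+1-c$ is free. Putting $p=d+1-c$, $q=c+2$, $r=cp$, $t=(d-c)q$ into \eqref{D} gives a matrix whose characteristic polynomial is again the one occurring in Lemma~\ref{G(ka)}(ii) (it is $\widetilde{B_2}$ with $c$ replaced by $d-c$ and its rows/columns reversed), so its largest root equals $\lambda_2(G_{d,d-c})=\lambda_2(G_{d,c})$; running the same equality analysis yields $G=K_{d+1-c}\vee\overline{M_c}\vee K_1\vee\overline{\mathcal{C}_{d-c}}\vee\overline{M_{c+2}}$, consistent with $G_{d,c}=G_{d,d-c}$.

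The transcription of Lemma~\ref{lem-2} is mechanical; the care is concentrated in two places. First, one must check that the parity count is the \emph{only} obstruction keeping $p$ (resp.\ $q$) away from $d+1-c$ (resp.\ $c+1$) and that the shifted value $d+2-c$ (resp.\ $c+2$) is genuinely realisable -- in particular that a $(c-3)$-regular graph on $c$ vertices exists for every odd $c\ge3$ (equivalently, a $2$-regular graph on $c$ vertices), which is exactly what produces the family $\overline{\mathcal{C}_c}$. Second, the equality discussion must combine tightness of the interlacing step with the strict monotonicity in Facts~1--2 and the parity-forced minima to pin $G$ down, and then verify that every member of the stated family realises equality. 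I expect this interplay between the parity obstruction and the equality bookkeeping, rather than any single computation, to be the main obstacle.
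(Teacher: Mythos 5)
Your proposal is correct and follows essentially the same route as the paper: rerun the argument of Lemma~\ref{lem-2} with the quotient matrix $B_3$, Facts~1--2, and the handshake parity $d|V(G_1)|=2|E(G_1)|+c$ forcing $|V(G_1)|$ odd and hence $p\ge d+2-c$, landing on $\widetilde{B_2}$. The only cosmetic difference is that the paper disposes of part (ii) by invoking the symmetry $G_{d,c}=G_{d,d-c}$ rather than writing out the mirrored computation as you do.
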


\begin{proof}
We only need to show (i). Otherwise, if $c \in [2, d-2]$ is even, then $d-c$ is odd. Due to $G_{d,c}=G_{d,d-c}$, then the proof is similar to the case in which $c$ is odd.

Since $G \in \mathscr{G}_{d,c}$, then there exists a cut vertex $u$ such that $|E(u,V(G_1))|=c$, where $G_1$ is a component of $G-u$. Let $G_2$ be the union of
other components of $G-u$. Then $|E(u,V(G_2))|=d-c$. Similarly to Lemma \ref{lem-2}, we have $|V(G_1)|\geq d+1$ and $|V(G_2)|\geq d+1$. Since $d|V(G_1)|=2|E(G_1)|+c$ and $d|V(G_2)|=2|E(G_2)|+d-c$, then $|V(G_1)|$ is odd, and thus $|V(G_1)|\geq d+2$ and $|V(G_2)|$ is  even.  Let $V_1$ and $V_2$ be the sets of the neighbors of $u$ in $G_1$ and in $G_2$ respectively. Then $|V_1|=c$
and $|V_2|=d-c$. Set $G_1^{'}=G_1-V_1$ and $G_2^{'}=G_2-V_2$. Thus, $|V(G_1^{'})|= p \geq d+2-c$ and $|V(G_2^{'})|=q \geq d+1-(d-c)=c+1$. Set $|E(V(G_1^{'}),V_1)|=r$ and $|E(V(G_2^{'}),V_2)|=t$. We obtain a partition of $G$ with $V(G) = V(G_1^{'}) \cup V_1 \cup \{u\} \cup V_2 \cup V(G_2^{'})$ whose quotient matrix is just $B_3$ in \eqref{B3}. By Lemmas \ref{tridiagonal matrix} and \ref{interlacing}, we have $\lambda_{2}(G)\geq\lambda_{2}(B_{3})=\lambda_{1}(\widetilde{B_{3}})$, where $\widetilde{B_{3}}$ is defined in \eqref{B33}. As shown in Lemma \ref{lem-2}, by Fact 1 we get that the largest root $\lambda_{1}(\widetilde{B_{3}})$ of matrix $\widetilde{B_{3}}$ is strictly decreasing w.r.t. $r$ and $t$.

By the well-known Perron-Frobenius Theorem, the largest eigenvalue of an irreducible matrix will strictly decrease if its positive elements decrease. Then we can make $p$ and $q$ as small as possible, and set $r$ and $t$ as large as possible. Noting $r\leq c(d-1)$ and $t\leq(d-c)(d-1)$, we suppose $d+2-c\leq p\leq d-1$, $c+1\leq q\leq d-1$, $r=cp$ and $t=(d-c)q$. Then $G[V(G_1)],G[V(G_2^{'})],G[V_2],G[V(G_2^{'})]$ are four regular graphs of degree $d-1-p,d-c,d-1-q$ and $c$, respectively (note that such graphs exist since $p$ and $q$ are even). Thus, $\widetilde{B_{3}}$ is equal to matrix $D$ defined in \eqref{D}.

From Fact $2$ it follows that the largest root $\lambda_{1}(D)$ is strictly increasing w.r.t $p$ and $q$. Thus, we can set $p=d+2-c$ and $q=c+1$, which implies $\widetilde{B_3}=\widetilde{B_{2}}$ defined in \eqref{B2}. Consequently, $\lambda_{2}(G)\geq\lambda_{2}(G_{d,c})$, and equality holds if and only if $G=\overline{M_{d+2-c}} \vee \overline{\mathcal{C}_c} \vee K_1 \vee \overline{M_{d-c}} \vee K_{c+1}$, where $\mathcal{C}_c$ is the union of disjoint cycles on $c$ vertices.
\end{proof}

\noindent{\bf Proof of Theorem \ref{main2}}.
Clearly, (i) follows from Lemma \ref{cut edge}. For (ii) and (ii), assume that $G$ has a cut vertex, say $u$. Then, there exists some component $G_1$ of $G-u$ such that $|E(u,V(G_1))| = c \leq \frac{d-1}{2}$ if $d$ is odd, or such that $c \leq 2 \lfloor\frac{d}{4}\rfloor$ if $d$ is even. From Lemmas \ref{lem-1}--\ref{lem-3} and  \ref{compare} we get  $\lambda_{2}(G)\geq\lambda_{2}(G_{d,\frac{d-1}{2}})$ with the equality iff $G = G_{d,\frac{d-1}{2}}$ when $d$ is odd, or  $\lambda_{2}(G)\geq\lambda_{2}(G_{d,2\lfloor\frac{d}{4}\rfloor})$ with the equality iff $G =G_{d,2\lfloor\frac{d}{4}\rfloor}$ when $d$ is even, a contradiction.

Consequently, $\kappa(G) \geq 2$. \hfill$\Box$

\section*{Acknowledgments}
The authors are supported for this research by the National Natural Science Foundation of China (No. 11971274).

{\footnotesize }

\begin{thebibliography}\emph

\bibitem{abi-bri-mar-O}
A. Abiad, B. Brimkov, X. Martinez-Rivera, O. S., J.M. Zhang, Spectral bounds for the connectivity of regular graphs with given order. Electron.  J. Linear Algebra 34 (2018) 428--443.

\bibitem{alon-1}
N. Alon. Eigenvalues and expanders. Combin., 6 (1986) 83--96.

\bibitem{alo-2}
N. Alon, Tough Ramsey graphs without short cycles, J. Algebraic Combin. 4 (3) (1995) 189--195.

\bibitem{alon-mil}
N. Alon, V. D. Milman. $\lambda_1$, isoperimetric inequalities for graphs, and superconcentrators.
J. Combin. Theory Ser. B, 38 (1985) 73--88.

\bibitem{bro}
A.E. Brouwer, Toughness and spectrum of a graph, Linear Algebra Appl. 226--228 (1995) 267--271.

\bibitem{bro-coh-neu}
A.E. Brouwer, A.M. Cohen, A. Neumaier. Distance-Regular Graphs,  Springer, New York, 1989

\bibitem{bro-hae}
A. Brouwer, W. Haemers, Spectra of Graphs, Springer, 2012.

\bibitem{cha-IPL}
S.L. Chandran, Minimum cuts, girth and spectral threshold. Information Processing Letters, 89 (2004) 105--110.

\bibitem{che}
J. Cheeger, A lower bound for the smallest eigenvalue of the Laplacian. In Problems
in analysis (Papers dedicated to Salomon Bochner, 1969), pages 195--199. Princeton
Univ. Press, Princeton, NJ, 1970.

\bibitem{cve-sim-filomat}
D. Cvetkovi\'c, S. Simi\'c, The second largest eigenvalue of a graph (a survey), Filomat (Ni\u{s}), 9 (1995) 449--472.

\bibitem{cio-LAA}
S.M. Cioab\u{a}, Eigenvalues and edge-connectivity of regular graphs, Linear Algebra  Appl., 432 (2010) 458--470.

\bibitem{cio-JCTB}
S.M. Cioab\u{a},  D.A. Gregory, W.H. Haemers. Matchings in regular graphs from eigenvalues, J. Combin. Theory, Ser. B 99 (2009) 287--297.


\bibitem{cio-CMJ}
S.M. Cioab\u{a},  X.F. Gu. Connectivity, toughness, spanning trees of bounded
degree, and the spectrum of regular graphs, Czechoslovak Math. J.,  (2016) 913--924.

\bibitem{cio-won-LAA}
S.M. Cioab\u{a}, W. Wong, Edge-disjoint spanning trees and eigenvalues of regular graphs, Linear Algebra Appl. 437 (2012) 630--647.

\bibitem{cio-wong-DAM}
S.M. Cioab\u{a}, W. Wong, The spectrum and toughness of regular graphs, Discrete Appl. Math. 176 (2014) 43--52.


\bibitem{neu-sei}
A. Neumaier, J.J. Seidel, Discrete Hyperbolic Geometry, Combin., 3 (1983) 219--237.

\bibitem{dod}
J. Dodziuk, Difference equations, isoperimetric inequality and transience of certain
random walks, Trans. Amer. Math. Soc., 284 (1984) 787--794.

\bibitem{fie}
M. Fiedler, Algebraic connectivity of graphs. Czech. Math. J., 23 (1973) 298--305.

\bibitem{gu-lai-JGT}
X. Gu, H.-J. Lai, P. Li, S. Yao, Edge-disjoint spanning trees, edge connectivity and
eigenvalues in graphs, J. Graph Theory 81 (2016) 16--29.

\bibitem{hong-xia-lai}
Z.-M. Hong,Z.-J. Xia, H.-J. Lai. Vertex-connectivity and eigenvalues of graphs. Linear Algebra Appl. 579 (2019) 72--88.

\bibitem{horn-joh}
R. Horn, C. Johnson. Matrix Analysis. Cambridge university press, 1990.

\bibitem{hoo-lin-wig}
S. Hoory, N. Linial,  A. Wigderson. Expander graphs and their applications. Bull.
Amer. Math. Soc., 43 (2006) 439--561.

\bibitem{kri-sud}
M. Krivelevich, B. Sudakov. Pseudo-random graphs. More Sets,  Graphs and Numbers, 15 (2006) 199--262.

\bibitem{liu-chen}
B. Liu, S. Chen, Algebraic conditions for $t$-tough graphs, Czechoslovak Math. J. 60 (135) (2010) 1079--1089.

\bibitem{liu-AMC}
R.F. Liu, H.-J. Lai, Y. Tian, Y. Wu, Vertex-connectivity and eigenvalues of graphs with fixed girth, Appl. Math. Comput. 344-345 (2019) 141--149.

\bibitem{lu-JGT}
H. Lu, Regular graphs,eigenvalues and regular factors, J. Graph Theory 69 (2012) 349--355.


\bibitem{mar-spi-sri}
A.W. Marcus, D.A. Spielman, N. Srivastava, Interlacing families I: Bipartite Ramanujan graphs of all degrees, Ann. Math., 182 (2015) 307--325.

\bibitem{max}
G. Maxwell \cite{max}, Hyperbolic trees, J. Algebra 54 (1978) 46--49.

\bibitem{nil}
A. Nilli, On the second eigenvalue of a graph. Discrete Math., 91 (1991) 207--210.

\bibitem{O-LAA}
S. O, Edge-connectivity in regular multigraphs from eigenvalues, Linear Algebra Appl., 491 (2016) 4--14.

\bibitem{O-DAM}
S. O. The second largest eigenvalue and vertex-connectivity of regular multigraphs, Discrete Appl. Math.
 279 (2020) 118--124.

\bibitem{O-cio}
S. O, S.M. Cioab\u{a}, Edge-connectivity, eigenvalues and matchings in regular graphs, SIAM J. Discrete Math. 24 (2010) 1470--1481.

\bibitem{O-arxiv}
S. O, J.R. Park, J. Park, H. Yu. Sharp spectral bounds for the edge-connectivity of a regular graph, arXiv:1810.01189v2 (2018).

\bibitem{tam-sta}
K. Tamara, Z.  Stani\'c, Regular graphs with small second largest eigenvalue, Appl. Anal.  Disc. Math., 7 (2013) 235--249.


\bibitem{sta}
Z. Stani\'c, Some graphs whose second largest eigenvalue does not exceed 2, Linear Appl. Algebra., 437 (2012) 1812--1820.

\bibitem{wilf}
H. Wilf, Spectral bounds for the clique and independence numbers of graphs, J. Combin. Theory, Ser. B  40 (1986) 113--117.

\bibitem{zhang}
W. Zhang, Matching extendability and connectivity of regular graphs from eigenvalues, Graphs and Combinatorics 36 (2020) 93--108.

\end{thebibliography}
\end{document}